\renewcommand{\uppi}{\text{\textpi}}
\pgfplotsset{every tick label/.append style={font=\footnotesize}}
\pgfplotsset{label style={font=\footnotesize}}
\crefname{equation}{}{}
\crefname{figure}{Figure}{Figures}
\crefname{section}{Section}{Sections}
\crefname{hypothesis}{Hypothesis}{Hypotheses}
\newcommand{\B}[1]{\bm{#1}}
\newcommand{\x}{{\B{x}}}
\newcommand{\y}{{\B{y}}}
\newcommand{\dx}[1][x]{\,\mathrm{d}#1}
\newcommand{\trace}{\gamma}
\newcommand{\LL}{L}
\newcommand{\Ltwo}{\LL^2}
\newcommand{\pop}[1]{\mathcal{#1}}
\newcommand{\bop}[1]{\mathsf{#1}}
\newcommand{\popV}{\pop{V}}
\newcommand{\popK}{\pop{K}}
\newcommand{\bopV}{\bop{V}}
\newcommand{\bopK}{\bop{K}}
\newcommand{\bopKadj}{{\bop{K}'}}
\newcommand{\bopW}{\bop{W}}
\newcommand{\bopI}{\bop{Id}}
\newcommand\D{_\textup{D}}
\newcommand\N{_\textup{N}}
\newcommand\C{_\textup{C}}
\newcommand{\norm}[1]{\| #1 \|}
\newcommand{\starnorm}[2][]{\norm{#2}_{*#1}}
\newcommand{\Vnorm}[2][]{\norm{#2}_{\productspace{V}#1}}
\newcommand{\Bnorm}[2][]{\norm{#2}_{\form{B}#1}}
\newcommand{\Ltwonorm}[2][\Gamma]{\norm{#2}_{#1}}
\newcommand{\Hnorm}[3][\Gamma]{\norm{#3}_{H^{#2}(#1)}}
\newcommand{\Hstarnorm}[3][\Gamma]{\seminorm{#3}_{H_*^{#2}(#1)}}
\newcommand{\seminorm}[1]{| #1 |}
\newcommand{\Hseminorm}[3][\Gamma]{\seminorm{#3}_{H^{#2}(#1)}}
\newcommand{\average}[1]{\left\{#1\right\}_\Gamma}
\newcommand{\RR}{\mathbb{R}}
\newcommand{\form}[1]{\mathcal{#1}}
\newcommand{\productspace}[1]{\mathbb{#1}}
\newcommand{\betaparam}{\beta}
\newcommand{\Th}{\mathcal{T}_h}
\renewcommand{\emptyset}{\varnothing}
\newcommand{\generalpart}[2]{\left[#2\right]_#1}
\newcommand{\pmpart}[1]{\generalpart{\pm}{#1}}
\newcommand{\negpart}[1]{\generalpart{-}{#1}}
\newcommand{\pospart}[1]{\generalpart{+}{#1}}
\newcommand{\param}{\tau}
\newcommand{\change}{\delta}
\newcommand{\discrete}[1]{%
\ifthenelse{\equal{#1}{u}}{{\tt U}\D}{%
\ifthenelse{\equal{#1}{\lambda}}{{\tt U}\N}{%
\ifthenelse{\equal{#1}{v}}{{\tt V}\D}{%
\ifthenelse{\equal{#1}{\mu}}{{\tt V}\N}{%
{\tt#1} }}}}
}
\newcommand{\changeP}{\change P}
\newcommand{\distance}[3][]{d#1\ifthenelse{\equal{#2}{}}{}{\left(#2,#3\right)}}
\newcommand{\betamin}{\betaparam_{\min}}
\newcommand{\Hspace}[2][\Gamma]{H^{#2}(#1)}
\newcommand{\inner}[3][]{\left\langle#2,#3\right\rangle\ifthenelse{\equal{#1}{}}{}{_{#1}}}
\newcommand{\Ltwoinner}[3][\Gamma]{\inner[#1]{#2}{#3}}
\newcommand{\Poly}{\textup{P}}
\newcommand{\DPoly}{\textup{DP}}
\newcommand{\DjumpPoly}{\widetilde{\textup{DP}}}
\newcommand{\DualPoly}{\textup{DUAL}}
\newcommand\bcpsi{\psi\C}
\newcommand\bcg{g\C}
\definecolor{myorange}{rgb}{0.9568,0.4941,0.1961}
\definecolor{myred}{rgb}{0.9098,0.1294,0.2078}
\definecolor{myblue}{rgb}{0.0352,0.4981,0.6509}
\definecolor{myhyperblue}{rgb}{0.1607,0.3922,0.9}
\definecolor{mygreen}{rgb}{0.2235,0.6353,0.2588}
\definecolor{mygrey}{rgb}{0.3,0.3,0.3}
\definecolor{Ccolor}{HTML}{FFA366}
\definecolor{DCcolor}{HTML}{2EA3D0}
\definecolor{reference}{rgb}{0,0,0}
\newcommand{\Ccolordesc}{orange}
\newcommand{\DCcolordesc}{blue}
\definecolor{red}{rgb}{1,0,0}
\newcommand{\referenceplot}{\addplot [reference, dashed, very thick]}
\newcommand{\Cplot     }[1][2]{\addplot [Ccolor, mark=square*, thick, mark size=#1, mark options={solid,draw=black}]}
\newcommand{\Coneplot  }[1][3]{\addplot [Ccolor, mark=triangle*, thick, mark size=#1, mark options={solid,draw=black}]}
\newcommand{\Ctwoplot  }[1][3]{\addplot [Ccolor, mark=diamond*, thick, mark size=#1, mark options={solid,draw=black}]}
\newcommand{\Cthreeplot}[1][3]{\addplot [Ccolor, mark=pentagon*, thick, mark size=#1, mark options={solid,draw=black}]}
\newcommand{\DCplot     }[1][2]{\addplot [DCcolor, mark=*, thick, mark size=#1, mark options={solid,draw=black}]}
\newcommand{\DConeplot  }[1][3]{\addplot [DCcolor, mark=triangle*, thick, mark size=#1, mark options={solid,draw=black}]}
\newcommand{\DCtwoplot  }[1][3]{\addplot [DCcolor, mark=diamond*, thick, mark size=#1, mark options={solid,draw=black}]}
\newcommand{\DCthreeplot}[1][3]{\addplot [DCcolor, mark=pentagon*, thick, mark size=#1, mark options={solid,draw=black}]}
\newcommand{\onedesc}{triangles}
\newcommand{\twodesc}{diamonds}
\newcommand{\threedesc}{pentagons}
\newcommand{\Cdesc}{{\Ccolordesc} squares}
\newcommand{\DCdesc}{{\DCcolordesc} circles}
\newcounter{prope}
\newenvironment{property}[1][]{
\begin{flushleft}
\stepcounter{prope}
\leftskip1em
\rightskip\leftskip
\textup{\textbf{Property \arabic{prope}} \ifthenelse{\equal{#1}{}}{}{(#1)}:}}
{\end{flushleft}}
\title{.}
\title{Weak imposition of Signorini boundary conditions on the boundary element method\thanks{Submitted to the editors 2019-08-15.
\funding{Erik Burman was funded by the EPSRC grant EP/P01576X/1. Stefan Frei was funded by the DFG Research Scholarship 3935/1-1.}}}
\author{Erik Burman\thanks{Department of Mathematics, University College London, UK (\email{e.burman@ucl.ac.uk}).}
\and Stefan Frei\thanks{Department of Mathematics and Statistics, University of Konstanz, Germany (\email{stefan.frei@uni-konstanz.de}).}
\and Matthew W. Scroggs\thanks{Department of Engineering, University of Cambridge, UK (\email{mws48@cam.ac.uk}, \url{http://www.mscroggs.co.uk}).}}
\begin{document}

\maketitle

\begin{abstract}
We derive and analyse a boundary element formulation for boundary conditions involving inequalities.
In particular, we focus on Signorini contact conditions.
The Calder\'on projector is
used for the system matrix and boundary conditions are weakly imposed using a
particular variational boundary operator designed using techniques
from augmented Lagrangian methods. We present a complete numerical \emph{a priori} 
error analysis and present some numerical examples to illustrate the theory.
\end{abstract}

\begin{keywords}
   boundary element methods, Nitsche's method, Signorini problem, Calder\'on projector
\end{keywords}

\begin{AMS}
  65N38, 65R20, 74M15
\end{AMS}

\section{Introduction}
The application of Nitsche techniques to deal with variational inequalities
has received increasing interest recently, starting from a series of works 
by Chouly, Hild and Renard for elasticity problems with contact \cite{CH13b}. Their 
approach goes back to an augmented Lagrangian formulation, that has first been introduced 
by Alart \& Curnier \cite{AC91}. 

In a previous paper \cite{BeBuScro18}, we have shown how Nitsche techniques can be used
to impose Dirichlet, Neumann, mixed Dirichlet--Neumann or Robin conditions weakly within
boundary element methods. By using the Calder\'on projector, we were
able to derive a unified framework that can be used for different boundary conditions.

The purpose of this article is to extend these techniques to boundary conditions involving inequalities,
such as Signorini contact conditions.
In particular, we consider the Laplace equation with mixed Dirichlet and Signorini boundary conditions: Find $u$ such that
\begin{subequations}\label{eq:Laplace}
\begin{align}
-\Delta u &=0       &&\text{in } \Omega,\\
u &= g\D            &&\text{on } \Gamma\D,\label{eq:diri_bc}\\
u \leqslant \bcg\quad\text{and}\quad
\frac{\partial u}{\partial\B\nu} &\leqslant \bcpsi
                    &&\text{on } \Gamma\C,\label{eq:contact_bc1}\\
\left(\frac{\partial u}{\partial\B\nu} - \bcpsi\middle)\middle(u-\bcg\right) &= 0
                    &&\text{on } \Gamma\C\label{eq:contact_bc2}.
\end{align}\end{subequations}
Here $\Omega \subset \RR^3$ denotes a polyhedral domain with outward pointing normal
$\B\nu$ and boundary $\Gamma:= \Gamma\D \cup \Gamma\C$.
We assume for simplicity that the boundary between $\Gamma\D$ and $\Gamma\C$ coincides with edges between the faces of $\Gamma$.
Whenever it is ambiguous, we will write $\B\nu_\x$ for the outward pointing normal at the point $\x$.
We assume that $g=\begin{cases}g\D&\text{in }\Gamma\D\\\bcg&\text{in }\Gamma\C\end{cases}\in L^2(\Gamma)$ and $\bcpsi \in \Hspace[\Gamma\C]{1/2}$.

Observe that when $\Gamma\C=\emptyset$, there exists a unique solution
to \cref{eq:Laplace} by the Lax--Milgram lemma.
In the case that meas($\Gamma\C)>0$,
the theory of Lions and Stampacchia \cite{LS67} for variational inequalities yields
existence and uniqueness of solutions.
We assume that $u \in \Hspace[\Omega]{3/2+\epsilon}$, for some $\epsilon>0$.

Boundary element methods for Signorini problems were first studied by Han~\cite{Han87}.
A variational formulation involving the Calder\'on projector was presented in~\cite{Han91}.
An alternative formulation is based on Steklov-Poincar\'e operators~\cite{Steinbach13, ZhangLi16}.
The numerical approaches to solve such formulations include a penalty formulation~\cite{SchmitzSchneider91}, operator splitting 
techniques~\cite{Spann92, ZhangZhu13} or semi-smooth Newton methods~\cite{Steinbach13, ZhangLi16}.
The latter reference includes besides the usual energy norm estimates
an $L^2(\Gamma)$-error estimate based on a duality argument. 
Maischak \& Stephan~\cite{MaischakStephan04} presented \emph{a posteriori} error estimates and an 
$hp$-adaptive algorithm for the Signorini problem.
\emph{A priori} error estimates for a penalty-based $hp$ algorithm were shown by Chernov, Maischak \& Stephan~\cite{Chernov07}.
Recently, an augmented Lagrangian approach has been presented in combination with a semi-smooth Newton method~\cite{ZhangLi16},
and variational inequalities have been successfully used for time-dependent contact problems~\cite{Gimperlein}.

We will consider an approach where the full Calder\'on projector is
used and the boundary conditions are included by adding properly
scaled penalty terms to the two equations. This results in
formulations similar to the ones obtained for weak imposition of
boundary conditions using Nitsche's method \cite{Nit71}. The proposed
framework is flexible and allows for the design of a range of
different methods depending on the choice of weights and residuals.

An outline of the paper is as follows. In \cref{sec:b_op}, we introduce 
the basic boundary operators that will be needed and review some of their properties.
Then, in \cref{sec:Dirichlet}, we introduce the variational framework and review the results 
from \cite{BeBuScro18} for the pure Dirichlet problem.
In \cref{sec:contact}, we show how the framework can be applied to Signorini boundary conditions
and the mixed problem \cref{eq:Laplace}. The method is analysed in \cref{sec:analysis}.
We conclude by showing some numerical experiments in \cref{sec:numerical}.

\section{Boundary operators}\label{sec:b_op}
We define the Green's function for the Laplace operator in $\RR^3$ by
\begin{equation}
G(\x,\y) =
\frac{1}{4\uppi|\x-\y|}.
\end{equation}
In this paper, we focus on the problem in
$\RR^3$. Similar analysis can be used for problems in $\RR^2$, in which case this definition should be replaced by
$G(\x,\y) =-\log|\x-\y|/2\uppi$.

In the standard fashion (see e.g. \cite[Chapter 6]{Stein07}), we define
the single layer potential operator, $\pop{V}:H^{-1/2}(\Gamma)\to H^1(\Omega)$,
and
the double layer potential, $\pop{K}:H^{1/2}(\Gamma)\to H^1(\Omega)$,
for $v\in H^{1/2}(\Gamma)$, $\mu\in H^{-1/2}(\Gamma)$, and $\x\in\Omega\setminus\Gamma$ by
\begin{align}
(\popV\mu)(\x) &:= \int_{\Gamma} G(\x,\y) \mu(\y)\dx[\y]\label{eq:single},\\
(\popK v)(\x) &:= \int_{\Gamma} \frac{\partial G(\x,\y)}{\partial\B\nu_\y} v(\y)\dx[\y]\label{eq:double}.
\end{align}
We define the space $H^1(\Delta,\Omega):=\{v\in H^1(\Omega):\Delta v\in\Ltwo(\Omega)\}$, and
the Dirichlet and Neumann traces, $\trace\D:H^1(\Omega)\to H^{1/2}(\Gamma)$
and $\trace\N:H^1(\Delta,\Omega)\to H^{-1/2}(\Gamma)$, by
\begin{align}
\trace\D f(\x)&:=\lim_{\Omega\ni\y\to\x\in\Gamma}f(\y),\\
\trace\N f(\x)&:=\lim_{\Omega\ni\y\to\x\in\Gamma}\B\nu_\x\cdot\nabla f(\y).
\end{align}

We recall that if the Dirichlet and Neumann traces of a
harmonic function are known, then the potentials \cref{eq:single} and
\cref{eq:double} may be used to reconstruct the function in $\Omega$ using the
following relation.
\begin{equation}\label{eq:represent}
u = -\popK(\trace\D u) + \popV(\trace\N u).
\end{equation}

\noindent It is also known \cite[Lemma 6.6]{Stein07} that for all $\mu \in H^{-1/2}(\Gamma)$, the function
\begin{equation}\label{eq:sing_rec}
u^{\popV}_\mu := \popV \mu
\end{equation}
satisfies $-\Delta u^{\popV}_\mu = 0$ and
\begin{equation}\label{eq:sing_rec_stab}
\Hnorm[\Omega]{1}{u^{\popV}_\mu} \leqslant c \Hnorm{-1/2}{\mu}.
\end{equation}
Similarly \cite[Lemma 6.10]{Stein07}, the function
\begin{equation}\label{eq:doub_rec}
u^{\popK}_v := \popK v
\end{equation}
satisfies $-\Delta u^{\popK}_v = 0$ for all $v \in H^{1/2}(\Gamma)$ and
\begin{equation}\label{eq:doub_rec_stab}
\Hnorm[\Omega]{1}{u^{\popK}_v} \leqslant c \Hnorm{1/2}{v}.
\end{equation}

We define
$\average{\trace\D f}$ and $\average{\trace\N f}$
to be the averages of the interior and exterior Dirichlet and Neumann traces of $f$.
We define the single layer, double layer, adjoint double layer, and hypersingular boundary integral operators,
$\bopV:H^{-1/2}(\Gamma)\to H^{1/2}(\Gamma)$,
$\bopK:H^{1/2}(\Gamma)\to H^{1/2}(\Gamma)$,
$\bopKadj:H^{-1/2}(\Gamma)\to H^{-1/2}(\Gamma)$, and
$\bopW:H^{1/2}(\Gamma)\to H^{-1/2}(\Gamma)$,
by
\begin{subequations}
\begin{align}
(\bopK v)(\x)&:=\average{\trace\D\popK v}(\x),&
(\bopV \mu)(\x)&:=\average{\trace\D\popV\mu}(\x),\\
(\bopW v)(\x)&:=-\average{\trace\N\popK v}(\x),&
(\bopKadj\mu)(\x)&:=\average{\trace\N\popV\mu}(\x),
\end{align}
\end{subequations}
where
$\x\in\Gamma$,
$v\in H^{1/2}(\Gamma)$ and $\mu\in H^{-1/2}(\Gamma)$ \cite[Chapter 6]{Stein07}.

Next, we define the Calder\'on projector by
\begin{equation}\label{eq:calder}
\bop{C}:=
\begin{pmatrix}
(1-\sigma)\bopI-\bopK & \bopV\\
\bopW & \sigma\bopI+\bopKadj
\end{pmatrix},
\end{equation}
where $\sigma$ is defined for $\x\in\Gamma$ by \cite[Equation 6.11]{Stein07}
\begin{equation}
\sigma(\x) = \lim_{\epsilon\to0}\frac1{4\uppi\epsilon^2}\int_{\y\in\Omega:\seminorm{\y-\x}=\epsilon}\dx[\y].
\end{equation}
Recall that if $u$ is a solution of \cref{eq:Laplace} then it satisfies
\begin{equation}\label{eq:calder_id}
\bop{C}\begin{pmatrix}\trace\D u\\\trace\N u\end{pmatrix}=\begin{pmatrix}\trace\D u\\\trace\N u\end{pmatrix}.
\end{equation}

Taking the product of \cref{eq:calder_id} with two test functions, and using the fact that $\sigma=\frac12$ almost everywhere,
we arrive at the following equations.
\begin{align}
\left\langle \trace\D u,\mu \right\rangle_{\Gamma} &= \left\langle (\tfrac12\bopI - \bopK)
  \trace\D u,\mu\right\rangle_{\Gamma} + \left\langle \bopV\trace\N
  u,\mu\right\rangle_{\Gamma} &&\forall \mu \in H^{-1/2}(\Gamma),\label{eq:Calderon_1}\\
\left\langle \trace\N u,v \right\rangle_{\Gamma} &= \left\langle (\tfrac12\bopI + \bopKadj)
  \trace\N u,v \right\rangle_{\Gamma} + \left\langle \bopW \trace\D
  u,v\right\rangle_{\Gamma} &&\forall v \in H^{1/2}(\Gamma).\label{eq:Calderon_2}
\end{align}

For a more compact notation, we introduce $\lambda=\trace\N u$ and $u=\trace\D u$ and the Calder\'on form
\begin{multline}\label{eq:compact_form}
\form{C}[(u,\lambda),(v,\mu)]:=
\left\langle(\tfrac12\bopI-\bopK)
 u,\mu\right\rangle_{\Gamma} + \left\langle \bopV\lambda,\mu\right\rangle_{\Gamma}\\
+
\left\langle (\tfrac12\bopI + \bopKadj)
  \lambda,v \right\rangle_{\Gamma} + \left\langle \bopW u,v\right\rangle_{\Gamma}.
\end{multline}
We may then rewrite \cref{eq:Calderon_1} and \cref{eq:Calderon_2} as
\begin{equation}\label{eq:realtion}
\form{C}[(u,\lambda),(v,\mu)]=\left\langle u,\mu\right\rangle_\Gamma+\left\langle\lambda,v\right\rangle_\Gamma.
\end{equation}

\noindent We will also frequently use the multitrace form, defined by
\begin{equation}\label{eq:mult_trace}
\form{A}[(u,\lambda),(v,\mu)]:=
-\left\langle\bopK
 u,\mu\right\rangle_{\Gamma} + \left\langle \bopV\lambda,\mu\right\rangle_{\Gamma}
+
\left\langle \bopKadj
  \lambda,v \right\rangle_{\Gamma} + \left\langle \bopW u,v\right\rangle_{\Gamma}.
\end{equation}
Using this, we may rewrite \cref{eq:realtion} as
\begin{equation}\label{eq:skewsym_relation}
\form{A}[(u,\lambda),(v,\mu)]=\tfrac12\left\langle u,\mu\right\rangle_\Gamma+\tfrac12\left\langle\lambda,v\right\rangle_\Gamma.
\end{equation}

\noindent To quantify the two traces we introduce the product space
\[
\productspace{V} := H^{1/2}(\Gamma)\times H^{-1/2}(\Gamma)
\]
and the associated norm
\[
\Vnorm{(v,\mu)}:= \Hnorm{1/2}{v}+\Hnorm{-1/2}{\mu}.
\]

The continuity and coercivity of $\form{A}$ are immediate consequences of the properties of the operators 
$\bopV$, $\bopK$, $\bopKadj$ and $\bopW$:

\begin{lemma}[Continuity \& coercivity]\label{lemma:cont_cald}
There exists $C>0$ such that
\begin{align*}
\left|\form{A}[(w,\eta),(v,\mu)]\right| &\leqslant C \Vnorm{(w,\eta)}\Vnorm{(v,\mu)}&&\forall(w,\eta),(v,\mu)\in\productspace{V}.
\end{align*}
There exists $\alpha>0$ such that
\begin{align*}
\alpha\left(
\Hstarnorm{1/2}{v}^2+\Hnorm{-1/2}{\mu}^2
\right)
&\leqslant
\form{A}[(v,\mu),(v,\mu)]
&&\forall(v,\mu)\in\productspace{V}.
\end{align*}
\end{lemma}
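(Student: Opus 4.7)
The plan is to treat continuity and coercivity separately, since each reduces to well-known mapping and ellipticity properties of the four boundary integral operators appearing in \cref{eq:mult_trace}.

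For continuity, the strategy is to bound each of the four terms in $\form{A}$ using the standard mapping properties of the operators recalled in \cref{sec:b_op}: $\bopV: H^{-1/2}(\Gamma)\to H^{1/2}(\Gamma)$, $\bopK: H^{1/2}(\Gamma)\to H^{1/2}(\Gamma)$, $\bopKadj: H^{-1/2}(\Gamma)\to H^{-1/2}(\Gamma)$, $\bopW: H^{1/2}(\Gamma)\to H^{-1/2}(\Gamma)$. Applying the duality pairing between $H^{1/2}(\Gamma)$ and $H^{-1/2}(\Gamma)$ to each term then yields, for example, $|\langle \bopV\eta,\mu\rangle_\Gamma|\le C\|\eta\|_{-1/2}\|\mu\|_{-1/2}$ and similarly for the other three terms. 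Summing and bounding the resulting factors by $\Vnorm{(w,\eta)}\Vnorm{(v,\mu)}$ gives the continuity estimate.

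For coercivity, the key observation is that the two off-diagonal terms cancel on the diagonal. Indeed, testing with $(w,\eta)=(v,\mu)$ and using the fact that $\bopKadj$ is the $L^2$-adjoint of $\bopK$, we obtain
\begin{equation*}
\form{A}[(v,\mu),(v,\mu)]=-\langle \bopK v,\mu\rangle_\Gamma+\langle \bopKadj\mu,v\rangle_\Gamma+\langle \bopV\mu,\mu\rangle_\Gamma+\langle \bopW v,v\rangle_\Gamma=\langle \bopV\mu,\mu\rangle_\Gamma+\langle \bopW v,v\rangle_\Gamma.
\end{equation*}
It then remains to invoke the classical ellipticity results for the Laplace boundary integral operators in three dimensions: $\bopV$ is coercive on $H^{-1/2}(\Gamma)$, giving $\langle \bopV\mu,\mu\rangle_\Gamma\ge c_V\Hnorm{-1/2}{\mu}^2$, while $\bopW$ is coercive on the quotient space modulo constants, giving $\langle \bopW v,v\rangle_\Gamma\ge c_W\Hstarnorm{1/2}{v}^2$ with $\Hstarnorm{1/2}{\cdot}$ the appropriate $H^{1/2}$-seminorm. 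Taking $\alpha=\min(c_V,c_W)$ produces the desired bound.

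No serious obstacle is expected, since both continuity and coercivity ultimately reduce to standard and well-documented properties of $\bopV$, $\bopK$, $\bopKadj$ and $\bopW$ (see e.g. \cite[Chapter 6]{Stein07}). The only subtle point is the use of the seminorm $\Hstarnorm{1/2}{\cdot}$ on the Dirichlet component in the coercivity estimate, which is unavoidable because of the one-dimensional kernel (the constants) of $\bopW$; this is precisely why the statement of the lemma uses $\Hstarnorm{1/2}{v}$ rather than $\Hnorm{1/2}{v}$.
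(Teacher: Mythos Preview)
Your argument is correct and is precisely the standard one: continuity follows from the mapping properties of $\bopV,\bopK,\bopKadj,\bopW$, and coercivity follows from the cancellation $-\langle\bopK v,\mu\rangle_\Gamma+\langle\bopKadj\mu,v\rangle_\Gamma=0$ together with the ellipticity of $\bopV$ on $H^{-1/2}(\Gamma)$ and of $\bopW$ on $H^{1/2}(\Gamma)$ modulo constants. The paper itself does not reproduce this argument but simply refers to \cite{BeBuScro18}, where exactly this reasoning is carried out; so your proposal is not a different route but rather a faithful unpacking of the cited proof.
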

\begin{proof}See \cite{BeBuScro18}.\end{proof}

\section{Discretisation and weak imposition of Dirichlet boundary conditions}
\label{sec:Dirichlet}
In this section, we introduce the discrete spaces and review briefly how (non-homogeneous) Dirichlet
boundary conditions can be imposed weakly within the variational 
formulations introduced above. For a detailed derivation, and for different
boundary conditions, we refer to \cite{BeBuScro18}.

To reduce the number of constants that appear,
we introduce the following notation.
\begin{itemize}
\item If $\exists C>0$ such that $a\leqslant Cb$, then we write $a\lesssim b$.
\item If $a\lesssim b$ and $b\lesssim a$, then we write $a\eqsim b$.
\end{itemize}

We assume that $\Omega$ is a polygonal domain with faces denoted by
$\{\Gamma_i\}_{i=1}^M$.
We introduce a family of conforming, shape regular triangulations of
$\Gamma$, $\{\Th\}_{h>0}$, indexed by the largest element
diameter of the mesh, $h$.
We let $T_1,..,T_m\in\Th$ be the triangles of a triangulation.

We consider the following finite element spaces
\begin{align*}
\Poly^k_h(\Gamma) &:= \{v_h \in C^0(\Gamma): v_h\vert_{T_i} \in \mathbb{P}_k(T_i) \text{, for every }T_i\in\Th\},\\
\DPoly^l_h(\Gamma) &:= \{v_h \in \Ltwo(\Gamma): v_h\vert_{T_i} \in \mathbb{P}_l(T_i) \text{, for every }T_i\in\Th\},\\
\DjumpPoly^l_h(\Gamma) &:= \{v_h \in \DPoly^l_h(\Gamma): v_h\vert_{\Gamma_i} \in
C^0(\Gamma_i)\text{, for }i=1,\hdots,M\},
\end{align*}
where $\mathbb{P}_k(T_i)$ denotes the space of polynomials of order less
than or equal to $k$ on the triangle $T_i$.

In addition, we consider the space $\DualPoly_h^0(\Gamma)$ of piecewise constant functions on the barycentric dual grid, as shown in \cref{fig:barycentric_dual}.
On non-smooth domains, these spaces have lower order approximation properties than the standard space $\DPoly_h^0(\Gamma)$, as given in the following lemma.

\begin{figure}
\centering
\begin{tikzpicture}[line cap=round,line join=round,>=angle 60,x=1.5cm,y=1.5cm]
\clip (3,0.6) rectangle (5.8,2.7);
\def\Gxtox{1}
\def\Gxtoy{0}
\def\Gytox{0.72}
\def\Gytoy{0.6}
\def\trihei{0.866}

\newcommand{\Gmake}[2]{{(#1)*\Gxtox+(#2)*\Gytox},{(#1)*\Gxtoy+(#2)*\Gytoy}}
\newcommand{\Gmakeup}[2]{{(#1)*\Gxtox+(#2)*\Gytox},{0.5+(#1)*\Gxtoy+(#2)*\Gytoy}}

\fill[DCcolor,opacity=0.1] (2,0) rectangle (6,4);
\fill[DCcolor,opacity=0.5] (2,0) rectangle (6,4);

\foreach \i in {0,...,6}
    \draw[line width=.4pt,white] (\Gmake{-3}{\i*\trihei}) -- (\Gmake{10}{\i*\trihei});
\foreach \i in {-2,...,4}
    \draw[line width=.4pt,white] (\Gmake{\i}{0}) -- (\Gmake{\i+4}{8*\trihei});
\foreach \i in {3,...,11}
    \draw[line width=.4pt,white] (\Gmake{\i}{0}) -- (\Gmake{\i-4}{8*\trihei});

\end{tikzpicture}%
\begin{tikzpicture}[line cap=round,line join=round,>=angle 60,x=1.5cm,y=1.5cm]
\clip (3,0.6) rectangle (5.8,2.7);
\def\Gxtox{1}
\def\Gxtoy{0}
\def\Gytox{0.72}
\def\Gytoy{0.6}
\def\trihei{0.866}

\newcommand{\Gmake}[2]{{(#1)*\Gxtox+(#2)*\Gytox},{(#1)*\Gxtoy+(#2)*\Gytoy}}
\newcommand{\Gmakeup}[2]{{(#1)*\Gxtox+(#2)*\Gytox},{0.5+(#1)*\Gxtoy+(#2)*\Gytoy}}

\fill[DCcolor,opacity=0.1] (2,0) rectangle (6,4);
\fill[DCcolor,opacity=0.5] (2,0) rectangle (6,4);

\foreach \i in {0,...,6}
    \draw[line width=.4pt,white] (\Gmake{-3}{\i*\trihei}) -- (\Gmake{10}{\i*\trihei});
\foreach \i in {-2,...,4}
    \draw[line width=.4pt,white] (\Gmake{\i}{0}) -- (\Gmake{\i+4}{8*\trihei});
\foreach \i in {3,...,11}
    \draw[line width=.4pt,white] (\Gmake{\i}{0}) -- (\Gmake{\i-4}{8*\trihei});

\foreach \i in {0,...,9}
    \draw[line width=.4pt,white,dashed] (\Gmake{\i/2}{0*\trihei}) -- (\Gmake{\i/2}{6*\trihei});

\foreach \i in {-2,...,9}
    \draw[line width=.4pt,white,dashed] (\Gmake{6+\i/4}{(\i/2)*\trihei}) -- (\Gmake{\i/4-1.5}{(5+\i/2)*\trihei});

\foreach \i in {-3,...,9}
    \draw[line width=.4pt,white,dashed] (\Gmake{0.5-\i/4}{(1+\i/2)*\trihei}) -- (\Gmake{5-\i/4}{(4+\i/2)*\trihei});

\end{tikzpicture}%
\begin{tikzpicture}[line cap=round,line join=round,>=angle 60,x=1.5cm,y=1.5cm]
\clip (3,0.6) rectangle (5.8,2.7);
\def\Gxtox{1}
\def\Gxtoy{0}
\def\Gytox{0.72}
\def\Gytoy{0.6}
\def\trihei{0.866}

\newcommand{\Gmake}[2]{{(#1)*\Gxtox+(#2)*\Gytox},{(#1)*\Gxtoy+(#2)*\Gytoy}}
\newcommand{\Gmakeup}[2]{{(#1)*\Gxtox+(#2)*\Gytox},{0.5+(#1)*\Gxtoy+(#2)*\Gytoy}}

\fill[DCcolor,opacity=0.1] (2,0) rectangle (6,4);
\fill[DCcolor,opacity=0.5] (2,0) rectangle (6,4);

\foreach \i in {-3,...,3}
    \foreach \j in {-3,...,3}
        \draw[line width=.4pt,white] (\Gmake{3+\i-\j/2}{(2.666666+\j)*\trihei}) -- (\Gmake{3+\i-\j/2}{(3.333333+\j)*\trihei})
                                  -- (\Gmake{2.5+\i-\j/2}{(3.666666+\j)*\trihei}) -- (\Gmake{2+\i-\j/2}{(3.333333+\j)*\trihei});

\end{tikzpicture}
\caption{A grid (left), the barycentric refinement of the grid (centre), and the dual grid (right).
In a typical example, the initial grid will not be flat, and so the elements of the dual grid will not necessarily be flat.}
\label{fig:barycentric_dual}
\end{figure}

\begin{lemma}\label{lemma:DUAL0app}
Let $\mu\in\Hspace{s}$.
If $\Gamma$ consists of a finite number of smooth faces meeting at edges, then
\begin{align*}
\inf_{\eta_h \in \DualPoly_h^0(\Gamma)}
    \Hnorm{-1/2}{\mu-\eta_h}
&\lesssim h^{\xi+1/2} \Hnorm{\xi}{\mu}
\end{align*}
where $\xi=\min(\tfrac12,s)$.
If $\Gamma$ is smooth, then the same result holds with $\xi=\min(1,s)$.
\end{lemma}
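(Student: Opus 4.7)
The plan is to choose $\eta_h = P_h\mu$, the $L^2(\Gamma)$-orthogonal projection onto $\DualPoly_h^0(\Gamma)$, and to argue by duality. Starting from
\[
\Hnorm{-1/2}{\mu - P_h\mu} = \sup_{0 \neq v \in \Hspace{1/2}} \frac{\inner{\mu - P_h\mu}{v}}{\Hnorm{1/2}{v}},
\]
I would exploit the Galerkin orthogonality $\inner{\mu - P_h\mu}{v_h} = 0$ for every $v_h \in \DualPoly_h^0(\Gamma)$ to subtract $P_h v$ from $v$, and then apply Cauchy--Schwarz in $L^2$ to obtain
\[
\inner{\mu - P_h\mu}{v} = \inner{\mu - P_h\mu}{v - P_h v} \leqslant \Ltwonorm{\mu - P_h\mu}\,\Ltwonorm{v - P_h v}.
\]

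The second step is a cell-by-cell $L^2$-approximation estimate, which I expect in the form $\Ltwonorm{w - P_h w} \lesssim h^t \Hnorm{t}{w}$ for $t \in [0, t_\star]$, where $t_\star = 1$ when $\Gamma$ is smooth and $t_\star = 1/2$ otherwise. On a smooth face the dual cells are shape-regular patches confined to a single smooth component of $\Gamma$, and a classical Bramble--Hilbert/Poincar\'e argument gives $t_\star = 1$. The dual cells surrounding a primal vertex on an edge of $\Gamma$ consist, by contrast, of unions of pieces lying on different, non-coplanar faces, and for these cells the natural Poincar\'e inequality on $\Gamma$ yields only the weaker rate, hence $t_\star = 1/2$. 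Applying this estimate with $t = 1/2$ to $v$ and $t = \xi$ to $\mu$ (and using the continuous embedding $\Hspace{s} \hookrightarrow \Hspace{\xi}$ when $s>\xi$), then multiplying the two bounds and taking the supremum over $v$, yields the claimed estimate $h^{\xi + 1/2}\Hnorm{s}{\mu}$.

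The main obstacle will be the verification of the local $L^2$-approximation estimate on the exceptional edge cells. These cells cannot be mapped to a single fixed reference configuration because their intrinsic shape depends on the dihedral angle between the incident faces, so a direct Piola/scaling argument fails. Instead, I would flatten each face individually, parameterise such a cell as a union of planar patches glued along a segment, and establish a Poincar\'e-type inequality by hand with a constant independent of $h$ and of the dihedral angle. The threshold $t_\star = 1/2$ on these cells reflects the fact that the tangential gradient of a generic $\Hspace{1}$ function on $\Gamma$ has no continuity across the edge, so the piecewise constant $P_h w$ cannot in general approach $w$ at rate $h$ there; for $s \leqslant 1/2$ this subtlety is irrelevant, since the regularity of $\mu$ itself is already the binding constraint.
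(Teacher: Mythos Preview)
The paper does not actually prove this lemma; it simply cites \cite[Appendix 2]{ScroggsThesis}. Your duality argument via the $L^2$-projection is the standard route to such $H^{-1/2}$ approximation bounds and is almost certainly what is done there, so the overall strategy is sound.

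Two small remarks. First, your final sentence states the bound with $\Hnorm{s}{\mu}$, but your own argument in fact yields the sharper $\Hnorm{\xi}{\mu}$ that the lemma claims: once you apply the local $L^2$ estimate at level $t=\xi$ you already have $\Ltwonorm{\mu-P_h\mu}\lesssim h^{\xi}\Hnorm{\xi}{\mu}$, and the embedding $\Hspace{s}\hookrightarrow\Hspace{\xi}$ is unnecessary (and goes the wrong way for what you wrote). Second, your heuristic for why $t_\star=\tfrac12$ on polyhedra (``the tangential gradient has no continuity across the edge, so piecewise constants cannot reach rate $h$'') is not quite the mechanism: piecewise constants do not see gradient continuity, and on a dual cell straddling a single edge one can unfold the two faces isometrically into a plane and recover the full $O(h)$ Poincar\'e estimate. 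The genuine obstruction sits at the level of the \emph{global} space $H^s(\Gamma)$ for $s>\tfrac12$ on a non-smooth $\Gamma$, and at dual cells around polyhedral \emph{vertices}, where unfolding is not available. For the lemma as stated this is moot, since in the polyhedral case you only ever invoke $t\leqslant\tfrac12$, and at that level the fractional Poincar\'e inequality on each dual cell (including those crossing edges and vertices) holds with a constant controlled by shape regularity and the fixed geometry of $\Gamma$, exactly as you outline in your ``main obstacle'' paragraph.
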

\begin{proof}
See \cite[Appendix 2]{ScroggsThesis}.
\end{proof}

We observe that $\Poly^k_h(\Gamma) \subset H^{1/2}(\Gamma)$, $\DPoly^l_h(\Gamma)
\subset \Ltwo(\Gamma)$, $\DjumpPoly^l_h(\Gamma)
\subset \Ltwo(\Gamma)$, and $\DualPoly_h^0(\Gamma)\subset\Ltwo(\Gamma)$. We define the discrete product
space 
\begin{align*}
   \productspace{V}_h :=  \Poly^k_h(\Gamma) \times \Lambda_h^l,  
\end{align*}
where $\Lambda_h^l$ can be any of the spaces $\DPoly^l_h(\Gamma), \DjumpPoly^l_h(\Gamma)$ or $\DualPoly_h^0(\Gamma)$.

\subsection{Dirichlet boundary conditions}
Let us for the moment assume that $\Gamma\equiv\Gamma\D$.
Then, the basic idea is to add the following suitably weighted boundary residual to the weak formulation.
\begin{equation}
R_{\Gamma\D}(u_h,\lambda_h) := \betaparam\D^{1/2} (g\D-u_h).
\end{equation}
This is defined such that $R_{\Gamma\D}(u_h,\lambda_h)=0$ is equivalent to the boundary condition \cref{eq:diri_bc}.
We obtain an expression of the form
\begin{equation}\label{eq:cald_bc}
\form{C}[(u_h,\lambda_h),(v_h,\mu_h)]=\left\langle u_h,\mu_h
\right\rangle_\Gamma+\left\langle \lambda_h, v_h\right\rangle_{\Gamma} +
\left\langle R_{\Gamma\D}(u_h,\lambda_h), \betaparam_1 v_h + \betaparam_2 \mu_h \right\rangle_{\Gamma},
\end{equation}
or equivalently
\begin{equation}\label{eq:multi_bc}
\form{A}[(u_h,\lambda_h),(v_h,\mu_h)]=\tfrac12\left\langle u_h,\mu_h
\right\rangle_\Gamma+\tfrac12\left\langle \lambda_h, v_h\right\rangle_{\Gamma} +
\left\langle R_{\Gamma\D}(u_h,\lambda_h), \betaparam_1 v_h + \betaparam_2 \mu_h \right\rangle_{\Gamma},
\end{equation}
where $\betaparam_1$ and $\betaparam_2$ are problem dependent scaling operators that can be chosen as a
function of the physical parameters in order to obtain robustness of
the method.

For the Dirichlet problem, we choose $\betaparam_1 = \betaparam\D^{1/2}$, 
$\betaparam_2 = \betaparam\D^{-1/2}$, where 
different choices for
$\betaparam\D$ in the range $0\leqslant \betaparam\D \lesssim h^{-1}$ are possible.
Inserting this into \cref{eq:multi_bc}, we obtain the formulation:
\begin{multline}\label{eq:multi_D}
\form{A}[(u,\lambda),(v_h,\mu_h)] -\tfrac12 \left\langle  \lambda_h, v_h
\right\rangle_{\Gamma\D} + \tfrac12
\left\langle u_h, \mu_h \right\rangle_{\Gamma\D} +\left\langle \betaparam\D  u_h, v_h 
\right\rangle_{\Gamma\D} \\
= \left\langle g\D, \betaparam\D v_h + \mu_h
\right\rangle_{\Gamma\D}.
\end{multline}
By formally identifying $\lambda_h$ with $\partial_\nu u_h$ and $\mu_h$ with
$\partial_\nu v_h$, we obtain the classical (non-symmetric) Nitsche's method
 (up to the multiplicative factor $\tfrac12$).

For a more compact notation, we introduce the boundary operator
associated with the non-homogeneous Dirichlet condition
\begin{equation}\label{eq:operator_Dirichlet}
\form{B}\D[(u_h,\lambda_h),(v_h,\mu_h)]:=-\tfrac12\left\langle  \lambda_h, v_h \right\rangle_{\Gamma\D} +
\tfrac12\left\langle u_h, \mu_h \right\rangle_{\Gamma\D} +\left\langle \betaparam\D u_h, v_h 
\right\rangle_{\Gamma\D},
\end{equation}
the operator corresponding to the left-hand side
\begin{equation}\label{def:AD}
\form{A}\D[(u_h,\lambda_h),(v_h,\mu_h)]:=\form{A}[(u_h,\lambda_h),(v_h,\mu_h)]+ \form{B}\D
[(u_h,\lambda_h),(v_h,\mu_h)] 
\end{equation}
and the operator associated with the right-hand side
\begin{equation}\label{eq:LDir}
\form{L}\D(v_h,\mu_h) := \left\langle  g\D,\betaparam\D v_h + \mu_h\right\rangle_{\Gamma\D}.
\end{equation}

Using these and \cref{eq:multi_D}, we arrive at the following boundary element formulation: Find
$(u_h,\lambda_h) \in \productspace{V}_h$ such that
\begin{align}\label{eq:abstract_form_Dir}
\form{A}\D[(u_h,\lambda_h),(v_h,\mu_h)] &=
\form{L}\D(v_h,\mu_h)&&\forall(v_h,\mu_h) \in \productspace{V}_h.
\end{align}


\noindent We introduce the following $\form{B}\D$-norm
\[
\Bnorm[\D]{(v,\mu)} := \Vnorm{(v,\mu)} + \betaparam\D^{1/2} \Ltwonorm[\Gamma\D]{v},
\]
and summarise the properties of the bilinear form
$\form{A}\D$
in the following lemma.

\begin{lemma}[Properties of the bilinear form]\label{diri_coercive}
Let $\productspace{W}$ be a product Hilbert space for the primal and flux
variables, such that $\productspace{V} \subset \productspace{W}$.
The bilinear form has the following properties:
\begin{property}[Coercivity]
If $\betaparam\D=0$ or if there exists $\betamin>0$ (independent of $h$) such that $\betaparam\D>\betamin$, then there exists $\alpha>0$ such that $\forall (v,\mu) \in \productspace{W}$
\[\alpha \Bnorm[\D]{(v,\mu)} \leqslant \form{A}\D[(v,\mu),(v,\mu)].\]
\end{property}
\begin{property}[Continuity]
There exists
$M>0$ such that \\$\forall(w,\eta),(v,\mu) \in \productspace{W}$
\[\left|\form{A}\D[(v,\mu),(w,\eta)]\right| \leqslant M\Bnorm[\D]{(v,\mu)}\Bnorm[\D]{(w,\eta)}.
\]
\end{property}
\end{lemma}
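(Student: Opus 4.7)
The plan is to exploit the decomposition $\form{A}\D = \form{A} + \form{B}\D$ given in \cref{def:AD} and lean on the continuity and coercivity of $\form{A}$ from \cref{lemma:cont_cald}, so that only the boundary perturbation $\form{B}\D$ needs to be controlled directly.

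For coercivity, I would first evaluate $\form{B}\D$ on the diagonal. The two skew-symmetric cross terms in \cref{eq:operator_Dirichlet} satisfy $-\tfrac12\langle\mu,v\rangle_{\Gamma\D} + \tfrac12\langle v,\mu\rangle_{\Gamma\D} = 0$, leaving only
\begin{equation*}
\form{B}\D[(v,\mu),(v,\mu)] = \betaparam\D\,\Ltwonorm[\Gamma\D]{v}^2.
\end{equation*}
Combining this with the coercivity estimate from \cref{lemma:cont_cald} gives
\begin{equation*}
\form{A}\D[(v,\mu),(v,\mu)]\;\geqslant\;\alpha\bigl(\Hstarnorm{1/2}{v}^2 + \Hnorm{-1/2}{\mu}^2\bigr) + \betaparam\D\,\Ltwonorm[\Gamma\D]{v}^2.
\end{equation*}
The remaining task is to upgrade the quotient seminorm $\Hstarnorm{1/2}{v}$ to the full norm $\Hnorm{1/2}{v}$ that appears in $\Bnorm[\D]{\cdot}$. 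I would use a Poincar\'e/Peetre-type inequality of the form $\Hnorm{1/2}{v}^2 \lesssim \Hstarnorm{1/2}{v}^2 + \Ltwonorm[\Gamma\D]{v}^2$, which holds when $\Gamma\D$ has positive surface measure. For $\betaparam\D > \betamin$ the penalty term controls $\Ltwonorm[\Gamma\D]{v}$ uniformly in $h$, and the case $\betaparam\D=0$ is handled by absorbing a small fraction of the $\Hstarnorm{1/2}{v}^2$ contribution together with the fact that the remaining constant mode is controlled through the coupling with $\mu$ in $\form{A}$ itself.

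For continuity, I would bound each term of $\form{B}\D$ separately. The duality pairings obey
\begin{equation*}
\left|\tfrac12\langle\eta,v\rangle_{\Gamma\D}\right| + \left|\tfrac12\langle w,\mu\rangle_{\Gamma\D}\right|
\;\lesssim\;
\Hnorm{-1/2}{\eta}\,\Hnorm{1/2}{v} + \Hnorm{1/2}{w}\,\Hnorm{-1/2}{\mu},
\end{equation*}
each term being controlled by $\Vnorm{(w,\eta)}\Vnorm{(v,\mu)}\leqslant\Bnorm[\D]{(w,\eta)}\Bnorm[\D]{(v,\mu)}$. The penalty contribution is handled by Cauchy--Schwarz,
\begin{equation*}
\left|\langle\betaparam\D w,v\rangle_{\Gamma\D}\right|
\leqslant
\bigl(\betaparam\D^{1/2}\Ltwonorm[\Gamma\D]{w}\bigr)\bigl(\betaparam\D^{1/2}\Ltwonorm[\Gamma\D]{v}\bigr),
\end{equation*}
which fits directly into the product of $\Bnorm[\D]{\cdot}$-norms. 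Adding these to the continuity bound for $\form{A}$ from \cref{lemma:cont_cald} and collecting the constants yields the continuity constant $M$.

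The main obstacle is the coercivity step, because \cref{lemma:cont_cald} only delivers a bound in the quotient seminorm $\Hstarnorm{1/2}{v}$, whereas $\Bnorm[\D]{\cdot}$ requires the full $H^{1/2}$-norm. Bridging this gap is the whole reason the $\Gamma\D$-penalty enters the norm; the crucial observation is that the boundary term $\betaparam\D\Ltwonorm[\Gamma\D]{v}^2$ plays the role of a Poincar\'e remainder, recovering control of the constant mode of $v$. Once that is in place, both properties follow from repeated applications of Cauchy--Schwarz and the trace/duality inequalities.
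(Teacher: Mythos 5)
Your overall strategy — split $\form{A}\D = \form{A} + \form{B}\D$, observe that the skew cross terms of $\form{B}\D$ cancel on the diagonal, combine with \cref{lemma:cont_cald}, and upgrade the quotient seminorm $\Hstarnorm{1/2}{\cdot}$ to the full $H^{1/2}$-norm via a Poincar\'e-type inequality powered by the $\betaparam\D\Ltwonorm[\Gamma\D]{v}^2$ term — is the natural one and is presumably what \cite{BeBuScro18} does. The continuity part and the coercivity argument for $\betaparam\D>\betamin$ are correct.

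The gap is your treatment of the $\betaparam\D=0$ case. Your claim that ``the remaining constant mode is controlled through the coupling with $\mu$ in $\form{A}$ itself'' does not hold. On the diagonal, since $\bopKadj$ is the adjoint of $\bopK$, the two cross terms of the multitrace form cancel, leaving
\[
\form{A}[(v,\mu),(v,\mu)] = \left\langle \bopV\mu,\mu\right\rangle_\Gamma + \left\langle\bopW v,v\right\rangle_\Gamma,
\]
so there is no $v$--$\mu$ coupling at all in the diagonal quantity; in particular the constant mode of $v$ is simply annihilated by $\bopW$. Concretely, take $v\equiv1$, $\mu=0$, and $\betaparam\D=0$: then $\form{B}\D[(1,0),(1,0)]=0$, $\langle\bopW 1,1\rangle_\Gamma=0$, and $\langle\bopV 0,0\rangle_\Gamma=0$, so $\form{A}\D[(1,0),(1,0)]=0$, while $\Bnorm[\D]{(1,0)}=\Hnorm{1/2}{1}>0$. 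Hence the inequality you are asked to prove cannot be recovered by any ``absorbing a small fraction of $\Hstarnorm{1/2}{v}^2$'' argument, and your argument is not merely incomplete but wrong as stated for this case. A correct write-up should either prove coercivity only with respect to $\Hstarnorm{1/2}{v}+\Hnorm{-1/2}{\mu}$ when $\betaparam\D=0$, or explicitly flag that the $\betaparam\D=0$ variant requires a norm in which the constant mode is quotiented out (or a constraint on $\productspace{W}$ excluding constants); the $\betaparam\D>\betamin$ case, which is what is actually used in the Signorini analysis of \cref{prop:monotone}, is the one your argument cleanly delivers.
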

\begin{proof}
 See \cite[Section 4.1]{BeBuScro18}.
\end{proof}

\section{Weak imposition of Signorini boundary conditions}\label{sec:contact}
Recently Chouly, Hild and Renard \cite{CH13b, CHR15} showed how
contact problems can be treated in the context of Nitsche's method.
We will here show how we may use arguments
similar to theirs in the present framework to integrate unilateral
contact seamlessly. The result is a nonlinear system to which one may
apply Newton's method or a fixed-point iteration in a straightforward
manner. We prove existence and uniqueness of solutions to the nonlinear system
and optimal order error estimates. 

For the derivation of the formulation on the contact boundary we will
first omit the Dirichlet part, letting $\Gamma=\Gamma\C$.
To impose the contact conditions, we recall the following relations, introduced by Alart
and Curnier \cite{AC91}, with $\pmpart{x} := \pm \max(0,\pm x)$.
\begin{align}
(u-\bcg) &= \negpart{(u-\bcg) - \param^{-1} (\lambda-\bcpsi)} &&\text{on }\Gamma\C,\label{eq:ac_cond1}\\ 
(\lambda-\bcpsi) &= -\pospart{\param(u-\bcg) - (\lambda-\bcpsi)} &&\text{on }\Gamma\C,\label{eq:ac_cond2}
\end{align}
for all $\param>0$.
It is straighforward \cite{CH13b} to show that each of these two conditions is
equivalent to the contact boundary conditions \cref{eq:contact_bc1,eq:contact_bc2}.

To simplify the notation, we introduce the operators
\begin{align*}
P^\param(u_h,\lambda_h) &:= \param(u_h-\bcg) - (\lambda_h-\bcpsi)&&\text{and}&P^\param_0(u_h,\lambda_h) &:= \param u_h-\lambda_h.
\end{align*}
Using \cref{eq:ac_cond1}, we arrive at the following boundary term for the contact conditions
\begin{equation}\label{eq:res_bc1}
R^1_{\Gamma\C}(u_h,\lambda_h) = (\bcg-u_h) + \param^{-1}\negpart{P^\param(u_h,\lambda_h)}.
\end{equation}
Alternatively, by using \cref{eq:ac_cond2}, we arrive at the following boundary term
\begin{equation}\label{eq:res_bc2}
R^2_{\Gamma\C}(u_h,\lambda_h) = \param^{-1}\left((\bcpsi-\lambda_h) -\pospart{P^\param(u_h,\lambda_h)}\right).
\end{equation}
By using the fact that $x=\pospart{x}+\negpart{x}$, it can be shown that \cref{eq:res_bc1,eq:res_bc2} are equal.

Substituting \cref{eq:res_bc1} into \cref{eq:multi_bc}, and using the weights $\betaparam_1 = \param$  and $\betaparam_2 = 1$, we obtain
\begin{multline}\label{eq:cont_nit}
\form{A}[(u_h,\lambda_h),(v_h,\mu_h)]
    + \tfrac12 \left\langle \mu_h,u_h \right\rangle _{\Gamma\C}
    + \left\langle \param u_h-\tfrac12\lambda_h,v_h \right\rangle _{\Gamma\C}\\
    - \left\langle \negpart{P^\param(u_h,\lambda_h)},v_h+\param^{-1}\mu_h \right\rangle _{\Gamma\C}
        = \left\langle g\C, \param v_h+\mu_h \right\rangle _{\Gamma\C}.
\end{multline}
Using \cref{eq:res_bc2}, we have
\begin{multline}\label{eq:aug_Lagr}
\form{A}[(u_h,\lambda_h),(v_h,\mu_h)]
    + \tfrac12 \left\langle \lambda_h,v_h\right\rangle _{\Gamma\C}
    + \left\langle \param^{-1}\lambda_h-\tfrac12 u_h,\mu_h \right\rangle _{\Gamma\C}\\
    + \left\langle \pospart{P^\param(u_h,\lambda_h)},v_h + \param^{-1}\mu_h\right\rangle _{\Gamma\C}
        = \left\langle \bcpsi, v_h+\param^{-1}\mu_h \right\rangle _{\Gamma\C}.
\end{multline}

We see that \cref{eq:aug_Lagr} is similar to the non-symmetric
version of the method proposed in \cite{CHR15} and \cref{eq:cont_nit}
is similar to the non-symmetric Nitsche formulation for contact
discussed in \cite{BHL16}. As pointed out in the latter reference, the two
formulations are equivalent, with the same solutions. In what follows, we
focus exclusively on the variant \cref{eq:aug_Lagr}.

\noindent Defining
\begin{align}
\begin{split}
\form{B}\C[(u_h,\lambda_h),(v_h,\mu_h)]&:=\tfrac12 \left\langle \lambda_h,v_h\right\rangle _{\Gamma\C}
    + \left\langle \param^{-1}\lambda_h-\tfrac12 u_h,\mu_h \right\rangle _{\Gamma\C}\label{def:BC}
   \\
    &\qquad\quad+ \left\langle \pospart{P^\param(u_h,\lambda_h)},v_h + \param^{-1}\mu_h\right\rangle _{\Gamma\C}, \end{split}\\
\form{L}\C(v_h,\mu_h)&:=\left\langle \bcpsi, v_h+\param^{-1}\mu_h \right\rangle _{\Gamma\C},\label{def:LC}\\
\form{A}\C[(u_h,\lambda_h),(v_h,\mu_h)]&:=\form{A}[(u_h,\lambda_h),(v_h,\mu_h)]+\form{B}\C[(u_h,\lambda_h),(v_h,\mu_h)],
\end{align}
we arrive at the boundary element method formulation: Find $(u_h,\lambda_h) \in \productspace{V}_h$ such that
\begin{align}
\form{A}\C[(u_h,\lambda_h),(v_h,\mu_h)] &= \form{L}\C (v_h,\mu_h) &&\forall(v_h,\mu_h)\in\productspace{V}_h.\label{nonlinearsystem}
\end{align}

\subsection{Mixed Dirichlet and contact boundary conditions}
Combining the formulations for the Dirichlet and contact conditions, we arrive at the following
boundary element method for the problem \cref{eq:Laplace}:
Find $(u_h,\lambda_h) \in \productspace{V}_h$ such that
\begin{multline}\label{eq:aug_BEM}
\form{A}\D[(u_h,\lambda_h),(v_h,\mu_h)] + \form{B}\C
[(u_h,\lambda_h),(v_h,\mu_h)]
= \mathcal{L}\D (v_h,\mu_h) + \mathcal{L}\C (v_h,\mu_h) 
\\\forall (v_h,\mu_h) \in \productspace{V}_h,
\end{multline}
where $\form{A}\D$, $\form{L}\D$, $\form{B}\C$ and $\form{L}\C$ are defined in \cref{def:AD,def:BC,eq:LDir,def:LC}.
For discretisation, we use the assumptions and spaces introduced in \cref{sec:Dirichlet}.
Note that the formulation \eqref{eq:aug_BEM} is consistent, i.e.\,the continuous solution $(u,\lambda)$
to \eqref{eq:Laplace}
fulfills \eqref{eq:aug_BEM} for all $(v_h,\mu_h)\in \productspace{V}_h$.

\section{Analysis}\label{sec:analysis}
In this section, we prove the existence of unique solutions to the nonlinear
system of equations \cref{eq:aug_BEM} as well as optimal error estimates.

We assume that the solution $(u,\lambda)$ of \cref{eq:Laplace} lies in
$\productspace{W} := H^{1+\epsilon}(\Gamma)\times H^{\epsilon}(\tilde{\Gamma})$ for some $\epsilon \in (0,1/2]$,
where $\tilde{\Gamma}=\cup_{i=1}^M \Gamma_i\setminus\partial\Gamma_i$ is the set of boundary points that lie in the interior of the faces $\Gamma_i$.
As the normal vectors $\B\nu_{\x}$ are discontinuous between faces, we can not expect a higher global regularity for $\lambda$.

We define the distance function $\distance[\C]{}{}$ and norm $\starnorm\cdot$, for $(v,\mu),(w,\eta)\in\productspace{W}$, by
\begin{align}
\distance[\C]{(v,\mu)}{(w,\eta)}
&:=
\Bnorm[\D]{(v-w,\mu-\eta)}
\notag\\&\hspace{7mm}
+\Ltwonorm[\Gamma\C]{\param^{-\frac12}\left(\mu-\eta +\pospart{P^\param(v,\mu)}-\pospart{P^\param(w,\eta)}\right)},
\\
\starnorm{(v,\mu)}&:= \Bnorm[\D]{(v,\mu)}+\Ltwonorm[\Gamma\C]{\param^{\frac12}v} + \Ltwonorm[\Gamma\C]{\param^{-\frac12}\mu}.
\end{align}
We note that due the appearance of $\pospart\cdot$ in its second term, $\distance[\C]{}{}$ is not a norm.
$\distance[\C]{}{}$ does provide a bound on the error however, as for all $(v,\mu)\in\productspace{W}$,
$\distance[\C]{(v,\mu)}{(0,0)}\geqslant\Bnorm[\D]{(v,\mu)}\geqslant\Vnorm{(v,\mu)}$.

When proving this section's results, we will use properties of the $\pospart\cdot$ function that are given in the following lemma.
\begin{lemma}\label{lem:monotone}
For all $a,b \in \mathbb{R}$,
\begin{align}
\left(\pospart{a}-\pospart{b}\right)^2 &\leqslant \left(\pospart{a} - \pospart{b}\right) (a - b),\label{eq:tech1}\\
\seminorm{\pospart{a}-\pospart{b}} &\leqslant \seminorm{a-b}.\label{eq:tech2}
\end{align}
\end{lemma}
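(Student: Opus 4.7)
The plan is to treat both inequalities as elementary facts about the map $x\mapsto\pospart{x}=\max(0,x)$, and to avoid a four-case analysis by exploiting the decomposition $x=\pospart{x}+\negpart{x}$ together with the identity $\pospart{x}\negpart{x}=0$.

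For \cref{eq:tech1}, the key algebraic manipulation I would perform first is to write
\[
(\pospart{a}-\pospart{b})(a-b)-(\pospart{a}-\pospart{b})^2
=(\pospart{a}-\pospart{b})\bigl((a-\pospart{a})-(b-\pospart{b})\bigr)
=(\pospart{a}-\pospart{b})(\negpart{a}-\negpart{b}).
\]
Expanding and cancelling the two terms $\pospart{a}\negpart{a}$ and $\pospart{b}\negpart{b}$ (both of which vanish since at each point at most one of $\pospart{\cdot},\negpart{\cdot}$ is nonzero) reduces the right-hand side to $-\pospart{a}\negpart{b}-\pospart{b}\negpart{a}$. Since $\pospart{x}\geqslant 0\geqslant\negpart{x}$ for all $x\in\RR$, each of these two products is nonpositive, so their negatives are nonnegative, which gives \cref{eq:tech1}.

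For \cref{eq:tech2}, I would use the representation $\pospart{x}=\tfrac{1}{2}(x+|x|)$, which yields
\[
\pospart{a}-\pospart{b}=\tfrac{1}{2}\bigl((a-b)+(|a|-|b|)\bigr),
\]
so that by the triangle inequality and the reverse triangle inequality $\bigl||a|-|b|\bigr|\leqslant|a-b|$,
\[
\seminorm{\pospart{a}-\pospart{b}}\leqslant\tfrac{1}{2}\bigl(|a-b|+\bigl||a|-|b|\bigr|\bigr)\leqslant|a-b|.
\]

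There is no real obstacle: both statements are standard, and the only place that requires a moment of care is verifying the sign in $-\pospart{a}\negpart{b}\geqslant 0$ when writing out the expansion in the first step. Alternatively, each inequality can be dispatched by checking the four combinations of signs of $a$ and $b$, but the decomposition route above avoids the bookkeeping.
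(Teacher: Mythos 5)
Your proof is correct. The paper itself does not prove this lemma---it simply cites Chouly and Hild~\cite{CH13b} for these well-known facts---so there is no internal argument to compare against. Your route is a clean, self-contained one: for the first inequality, the identity $a-\pospart{a}=\negpart{a}$ reduces the difference $(\pospart{a}-\pospart{b})(a-b)-(\pospart{a}-\pospart{b})^2$ to $(\pospart{a}-\pospart{b})(\negpart{a}-\negpart{b})=-\pospart{a}\negpart{b}-\pospart{b}\negpart{a}\geqslant 0$, using $\pospart{x}\negpart{x}=0$ and the sign constraints $\pospart{x}\geqslant 0\geqslant\negpart{x}$; for the second, the representation $\pospart{x}=\tfrac12(x+\seminorm{x})$ together with the reverse triangle inequality gives the Lipschitz bound immediately. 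Both steps check out. The only caveats worth recording are that (i) this argument is tied to $\RR$; the $\Ltwo(\Gamma\C)$ inequalities used later in the paper follow pointwise almost everywhere and then by integration, which is implicit and fine, and (ii) a slightly more conceptual reading of the first inequality is that $\pospart{\cdot}$ is the metric projection onto the convex set $[0,\infty)$, and the inequality is the standard monotonicity-plus-firm-nonexpansiveness of such projections, which is essentially the content of the Alart--Curnier/Chouly--Hild references; your algebraic derivation recovers this fact directly without invoking convex analysis.
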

\begin{proof}
For a proof of these well-known properties see e.g.~\cite{CH13b}.
\end{proof}

We now prove a result analogous to the coercivity assumption in \cite{BeBuScro18}.

\begin{lemma}\label{prop:monotone}
If there is $\betamin>0$, independent of $h$, such that $\betaparam\D>\betamin$, then
there is $\alpha>0$ such that for all $(v,\mu),(w,\eta) \in \productspace{W}$,
\begin{multline*}
\alpha\left(\distance[\C]{(v,\mu)}{(w,\eta)}\right)^2
\leqslant (\form{A}+\form{B}\D)[(v-w,\mu-\eta),(v-w,\mu-\eta)]\\
+  \form{B}\C [(v,\mu),(v-w,\mu-\eta)] - \form{B}\C [(w,\eta),(v-w,\mu-\eta)].
\end{multline*}
\end{lemma}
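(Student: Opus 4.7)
My plan is to denote the increment $(\delta v, \delta\mu) := (v-w, \mu-\eta)$ and decompose the right-hand side into a part controlled by the existing linear coercivity result (Lemma~\ref{diri_coercive}) plus a contact contribution that, after suitable algebraic manipulation and the monotonicity lemma, produces exactly the second term of $\distance[\C]{\cdot}{\cdot}$. Since the distance is a \emph{sum} of two seminorms, one has $(a+b)^2 \leqslant 2(a^2+b^2)$, so it suffices to bound each squared piece from below by the right-hand side up to a constant.

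\textbf{Step 1: Linear part.} Coercivity from Lemma~\ref{diri_coercive} applied to $(\delta v, \delta\mu)$ yields
\[
\alpha \Bnorm[\D]{(\delta v,\delta\mu)}^2 \leqslant (\form{A}+\form{B}\D)[(\delta v,\delta\mu),(\delta v,\delta\mu)],
\]
which takes care of the first term of the distance.

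\textbf{Step 2: Nonlinear part, algebraic cleanup.} I expand $\form{B}\C[(v,\mu),(\delta v,\delta\mu)] - \form{B}\C[(w,\eta),(\delta v,\delta\mu)]$ using the definition \eqref{def:BC}. The two terms $\tfrac12\langle \delta\mu,\delta v\rangle_{\Gamma\C}$ and $-\tfrac12\langle\delta v,\delta\mu\rangle_{\Gamma\C}$ cancel, and the $\param^{-1}\delta\mu$ contribution yields $\Ltwonorm[\Gamma\C]{\param^{-1/2}\delta\mu}^2$. What remains is
\[
\left\langle \pospart{P^\param(v,\mu)} - \pospart{P^\param(w,\eta)},\; \delta v + \param^{-1}\delta\mu \right\rangle_{\Gamma\C}.
\]
Now the key algebraic identity: since $P^\param(v,\mu) - P^\param(w,\eta) = \param\delta v - \delta\mu$, one rewrites $\delta v + \param^{-1}\delta\mu = \param^{-1}\bigl(P^\param(v,\mu)-P^\param(w,\eta)\bigr) + 2\param^{-1}\delta\mu$.

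\textbf{Step 3: Apply monotonicity and complete the square.} Splitting the inner product according to this identity, the first piece is bounded below by Lemma~\ref{lem:monotone} inequality~\eqref{eq:tech1}:
\[
\left\langle \pospart{P^\param(v,\mu)}-\pospart{P^\param(w,\eta)},\, \param^{-1}(P^\param(v,\mu)-P^\param(w,\eta))\right\rangle_{\Gamma\C} \geqslant \Ltwonorm[\Gamma\C]{\param^{-1/2}\bigl(\pospart{P^\param(v,\mu)}-\pospart{P^\param(w,\eta)}\bigr)}^2.
\]
Setting $A := \param^{-1/2}\bigl(\pospart{P^\param(v,\mu)}-\pospart{P^\param(w,\eta)}\bigr)$ and $B := \param^{-1/2}\delta\mu$, the entire contact contribution becomes at least $\|A\|^2 + 2\langle A,B\rangle + \|B\|^2 = \|A+B\|^2$, which is precisely the square of the second term in $\distance[\C]{(v,\mu)}{(w,\eta)}$.

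\textbf{Step 4: Combine.} Adding the bounds from Steps 1 and 3 and applying $(a+b)^2\leqslant 2(a^2+b^2)$ to the definition of $\distance[\C]{\cdot}{\cdot}$ delivers the conclusion with constant $\alpha/2$ (or the minimum of the two constants, divided by two). The delicate part I expect is Step 3: it is tempting to lose a factor by handling the cross term $2\langle A,B\rangle$ with Young's inequality, but such a bound would give only partial control. Recognising that the cross term is exactly what is needed to complete the square is the one nontrivial observation driving the proof.
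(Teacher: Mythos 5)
Your proposal is correct and follows essentially the same route as the paper: Dirichlet coercivity for the $\Bnorm[\D]{\cdot}$ piece, expansion of the $\form{B}\C$ increment to isolate $\param^{-1}\Ltwonorm[\Gamma\C]{\delta\mu}^2$ plus $\Ltwoinner[\Gamma\C]{\changeP}{\delta v + \param^{-1}\delta\mu}$, and then \cref{eq:tech1} together with the identity $P_0^\param(\delta v,\delta\mu)+2\delta\mu = \param\delta v + \delta\mu$ to assemble the complete square $\param^{-1}\Ltwonorm[\Gamma\C]{\delta\mu+\changeP}^2$; the only difference is cosmetic (you bound the increment from below and reassemble the square, the paper starts from the expanded square and bounds it from above), so there is nothing substantive to compare.
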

\begin{proof}
From the analysis of the Dirichlet problem (\cref{diri_coercive}) we know that when $\betaparam\D>\betamin>0$,
\begin{equation}\label{eq:pmono1}
\alpha \Bnorm[\D]{(v-w,\mu-\eta)}^2\leqslant  (\form{A}+\form{B}\D)[(v-w,\mu-\eta),(v-w,\mu-\eta)].
\end{equation}
Introducing the notation 
$\changeP:= \pospart{P^\param(v,\mu)}-\pospart{P^\param(w,\eta)}$, we have
\begin{multline}
 \form{B}\C[(v,\mu),(v-w,\mu-\eta)] - \form{B}\C[(w,\eta),(v-w,\mu-\eta)]\\
 =\param^{-1} \Ltwonorm[\Gamma\C]{\mu-\eta}^2 + \Ltwoinner[\Gamma\C]{\changeP}{v-w+\param^{-1}(\mu-\eta)}.
\end{multline}
To estimate the expression on the right-hand side, we use
\begin{equation*}
 \param^{-1}
\Ltwonorm[\Gamma\C]{\mu-\eta +\changeP}^2
= \param^{-1} \left( \Ltwonorm[\Gamma\C]{\mu-\eta}^2 + \Ltwonorm[\Gamma\C]{\changeP}^2
+ 2 \Ltwoinner[\Gamma\C]{\mu-\eta}{\changeP}\right).
\end{equation*}
Using \cref{eq:tech1}, this implies
the bound
\begin{multline*}
 \param^{-1}
\Ltwonorm[\Gamma\C]{\mu-\eta+\changeP}^2
\\\leqslant 
\param^{-1}\left(
\Ltwonorm[\Gamma\C]{\mu-\eta}^2 + \Ltwoinner[\Gamma\C]{\changeP}{P_0^\param(v-w,\mu-\eta)}
+ 2\Ltwoinner[\Gamma\C]{\mu-\eta}{\changeP}\right).
\end{multline*}

Observing that $P_0^\param(v-w,\mu-\eta) + 2(\mu-\eta)= \param(v-w) + \mu-\eta$, we infer that
\begin{equation}\label{signor:lemmaproof2}
\param^{-1}
\Ltwonorm[\Gamma\C]{\mu-\eta+ \changeP}^2
\leqslant \form{B}\C [(v,\mu),(v-w,\mu-\eta) - \form{B}\C [(w,\eta),(v-w,\mu-\eta)].
\end{equation}
We conclude the proof by noting that
\begin{multline*}
\left(\distance[\C]{(v,\mu)}{(w,\eta)}\right)^2
\lesssim
\Bnorm[\D]{(v-w,\mu-\eta)}^2\\+\param^{-1}\Ltwonorm[\Gamma\C]{\mu-\eta +\pospart{P^\param(v,\mu)}-\pospart{P^\param(w,\eta)}}^2,
\end{multline*}
and applying \cref{eq:pmono1,signor:lemmaproof2}.
\end{proof}

Next, we prove a result analagous to the discrete coercivity assumption in \cite{BeBuScro18}.
\begin{lemma}\label{corol:aug_pos}
If there is $\betamin>0$, independent of $h$, such that $\betaparam\D>\betamin$, then
there is $\alpha>0$ such that for all $(v_h,\mu_h) \in \productspace{V}_h$,
\begin{multline*}
\alpha\left(\Bnorm[\D]{(v_h,\mu_h)}+\Ltwonorm[\Gamma\C]{\param^{-\frac12}\left(\mu_h +\pospart{P^\param(v_h,\mu_h)}\right)}\right)^2\\
\leqslant (\form{A}+\form{B}\D+\form{B}\C)[(v_h,\mu_h),(v_h,\mu_h)]
- \Ltwoinner[\Gamma\C]{\pospart{P^\param(v_h,\mu_h)}}{g\C - \param^{-1} \psi\C}
\end{multline*}
\end{lemma}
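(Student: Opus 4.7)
The plan is to prove the claim directly, closely mirroring the algebra in the proof of \cref{prop:monotone} but applied at a single discrete state $(v_h,\mu_h)$, with the monotonicity inequality \cref{eq:tech1} used against the base point $0$ so that the data $g\C$ and $\psi\C$ emerge as the inhomogeneous correction on the right-hand side. Note that one cannot simply specialise \cref{prop:monotone} to $(w,\eta)=(0,0)$, because that leaves a $\pospart{\psi\C-\param g\C}$ term inside the $L^2$-norm on the left, which is not what the claim asks for.

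First I would isolate the Dirichlet part: since $\betaparam\D>\betamin$, \cref{diri_coercive} immediately yields
$$\alpha\,\Bnorm[\D]{(v_h,\mu_h)}^2 \leqslant (\form{A}+\form{B}\D)[(v_h,\mu_h),(v_h,\mu_h)].$$
Next, inserting $(u_h,\lambda_h)=(v_h,\mu_h)=(v_h,\mu_h)$ in \cref{def:BC}, the cross-terms $\tfrac12\Ltwoinner[\Gamma\C]{\mu_h}{v_h}$ and $-\tfrac12\Ltwoinner[\Gamma\C]{v_h}{\mu_h}$ cancel, leaving
$$\form{B}\C[(v_h,\mu_h),(v_h,\mu_h)] = \param^{-1}\Ltwonorm[\Gamma\C]{\mu_h}^2 + \Ltwoinner[\Gamma\C]{\pospart{P^\param(v_h,\mu_h)}}{v_h+\param^{-1}\mu_h}.$$

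The second step is to expand the targeted quantity:
$$\param^{-1}\Ltwonorm[\Gamma\C]{\mu_h+\pospart{P^\param(v_h,\mu_h)}}^2 = \param^{-1}\Ltwonorm[\Gamma\C]{\mu_h}^2 + 2\param^{-1}\Ltwoinner[\Gamma\C]{\mu_h}{\pospart{P^\param(v_h,\mu_h)}} + \param^{-1}\Ltwonorm[\Gamma\C]{\pospart{P^\param(v_h,\mu_h)}}^2,$$
and to control the last summand via \cref{eq:tech1} with $a=P^\param(v_h,\mu_h)$, $b=0$, obtaining
$$\param^{-1}\Ltwonorm[\Gamma\C]{\pospart{P^\param(v_h,\mu_h)}}^2 \leqslant \Ltwoinner[\Gamma\C]{\pospart{P^\param(v_h,\mu_h)}}{\param^{-1}P^\param(v_h,\mu_h)}.$$
The key algebraic identity is $\param^{-1}P^\param(v_h,\mu_h)+2\param^{-1}\mu_h = v_h+\param^{-1}\mu_h - (g\C-\param^{-1}\psi\C)$, which after combination collapses the expansion into
$$\param^{-1}\Ltwonorm[\Gamma\C]{\mu_h+\pospart{P^\param(v_h,\mu_h)}}^2 \leqslant \form{B}\C[(v_h,\mu_h),(v_h,\mu_h)] - \Ltwoinner[\Gamma\C]{\pospart{P^\param(v_h,\mu_h)}}{g\C-\param^{-1}\psi\C}.$$

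Finally I would add this inequality to the Dirichlet bound and apply $(a+b)^2\leqslant 2(a^2+b^2)$, absorbing the factor of $2$ into a fresh $\alpha>0$, to obtain the stated estimate. The main obstacle is entirely bookkeeping: one must recognise that plugging $b=0$ into \cref{eq:tech1} (rather than the arbitrary second state used in \cref{prop:monotone}) is precisely what replaces $\pospart{P^\param(w,\eta)}$ by the constant datum $\pospart{\psi\C-\param g\C}$-free combination $g\C-\param^{-1}\psi\C$, and then all terms reassemble without any spurious data-norm remainders.
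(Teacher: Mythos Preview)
Your proposal is correct and follows essentially the same route as the paper's proof: isolate the Dirichlet coercivity from \cref{diri_coercive}, expand $\param^{-1}\Ltwonorm[\Gamma\C]{\mu_h+\pospart{P^\param(v_h,\mu_h)}}^2$, use the identity $\param^{-1}P^\param(v_h,\mu_h)+2\param^{-1}\mu_h=(v_h+\param^{-1}\mu_h)-(g\C-\param^{-1}\psi\C)$ to regroup into $\form{B}\C[(v_h,\mu_h),(v_h,\mu_h)]$ minus the data term, and finish with $(a+b)^2\leqslant 2(a^2+b^2)$. The only cosmetic difference is that you invoke \cref{eq:tech1} with $b=0$ to bound $\pospart{a}^2\leqslant\pospart{a}\,a$, whereas the paper writes this step as an equality (which it in fact is, pointwise); either version suffices.
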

\begin{proof}
The proof is similar to that of \cref{prop:monotone}, but
with $\mu_h$ and $v_h$ instead of $\mu-\eta$ and $v-w$. The appearance of the data term in the right-hand side is due to
the relation
\begin{align*}
\param^{-1} \Ltwonorm[\Gamma\C]{\pospart{P^\param(v_h,\mu_h)}}^2
\hspace{-20mm}&\hspace{20mm}
+ 2 \param^{-1} \Ltwoinner[\Gamma\C]{\mu_h}{\pospart{P^\param(v_h,\mu_h)}}
+ \param^{-1} \Ltwonorm[\Gamma\C]{\mu_h}^2\\
&= \param^{-1} \Ltwoinner[\Gamma\C]{\pospart{P^\param(v_h,\mu_h)}}{P^\param(v_h,\mu_h)}
+\param^{-1} \Ltwonorm[\Gamma\C]{\mu_h}^2\\
&= \Ltwoinner[\Gamma\C]{\pospart{P^\param(v_h,\mu_h)}}{u_h + \param^{-1} \mu_h}
\\&\hspace{8mm}
- \Ltwoinner[\Gamma\C]{\pospart{P^\param(v_h,\mu_h)}}{g\C - \param^{-1} \psi\C}
+ \param^{-1} \Ltwonorm[\Gamma\C]{\mu_h}^2\\
&= \form{B}\C [(v_h,\mu_h),(v_h,\mu_h)]
- \Ltwoinner[\Gamma\C]{\pospart{P^\param(v_h,\mu_h)}}{g\C - \param^{-1} \psi\C}.
\end{align*}
\end{proof}

\noindent Using \cref{prop:monotone,corol:aug_pos}, we may now prove that \cref{eq:aug_BEM} is well-posed.
\begin{theorem}
The finite dimensional nonlinear system \cref{eq:aug_BEM} admits a unique solution.
\end{theorem}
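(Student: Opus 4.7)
My approach splits the proof into uniqueness and existence, each built on one of the two coercivity-type results already established.

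\textbf{Uniqueness.} Given two discrete solutions $(u_h^i,\lambda_h^i)$ for $i=1,2$, I would subtract the corresponding instances of \cref{eq:aug_BEM} and test the difference with $(v_h,\mu_h)=(u_h^1-u_h^2,\lambda_h^1-\lambda_h^2)$. The two right-hand sides cancel, and the remaining expression is precisely the quantity that \cref{prop:monotone} bounds from below by $\alpha(\distance[\C]{(u_h^1,\lambda_h^1)}{(u_h^2,\lambda_h^2)})^2$. Hence this distance vanishes; since it dominates $\Bnorm[\D]{(u_h^1-u_h^2,\lambda_h^1-\lambda_h^2)}$, the two solutions coincide.

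\textbf{Existence.} Because $\productspace{V}_h$ is finite-dimensional, I would equip it with the Hilbert structure induced by the norm $\Bnorm[\D]\cdot$ and define a self-map $F:\productspace{V}_h\to\productspace{V}_h$ via Riesz representation of the residual
\[
(v_h,\mu_h)\mapsto (\form{A}\D+\form{B}\C)[(u_h,\lambda_h),(v_h,\mu_h)] - \form{L}\D(v_h,\mu_h) - \form{L}\C(v_h,\mu_h).
\]
Continuity of $F$ reduces to continuity of the bilinear forms together with the Lipschitz continuity of $x\mapsto\pospart{x}$, and a zero of $F$ is by construction a solution of \cref{eq:aug_BEM}. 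I would then invoke the standard Brouwer-type existence criterion: a continuous self-map of a finite-dimensional Hilbert space has a zero inside any closed ball on whose boundary $\langle F(x),x\rangle>0$.

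To verify this sign condition I would substitute $(v_h,\mu_h)=(u_h,\lambda_h)$ and apply \cref{corol:aug_pos}. This produces a lower bound consisting of the quadratic term $\alpha\Bnorm[\D]{(u_h,\lambda_h)}^2$, a sign-indefinite data term $\Ltwoinner[\Gamma\C]{\pospart{P^\param(u_h,\lambda_h)}}{\bcg-\param^{-1}\bcpsi}$, and the two linear source contributions $-\form{L}\D(u_h,\lambda_h)-\form{L}\C(u_h,\lambda_h)$. The main obstacle is controlling the data term, which depends nonlinearly on the argument. Using $\pospart{0}=0$ together with the contraction property \cref{eq:tech2} gives $\seminorm{\pospart{P^\param(u_h,\lambda_h)}}\leqslant\seminorm{P^\param(u_h,\lambda_h)}$; a Cauchy--Schwarz estimate on $\Gamma\C$ then bounds this contribution by a constant (depending only on $\param$, $\bcg$ and $\bcpsi$) times $1+\Bnorm[\D]{(u_h,\lambda_h)}$. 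The source contributions are bounded linearly in $\Bnorm[\D]{(u_h,\lambda_h)}$ in the same way, so on a sphere of sufficiently large radius $R$ the quadratic term dominates and the Brouwer-type theorem delivers the required zero.
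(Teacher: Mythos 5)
Your proposal is correct and follows essentially the same route as the paper: uniqueness from the monotonicity estimate (\cref{prop:monotone}), existence by defining the residual map via Riesz representation on the finite-dimensional $\productspace{V}_h$ and applying the Brouwer-type criterion from positivity on a large sphere plus continuity (which the paper also attributes to \cite[Chapter 2, Lemma 1.4]{Te77}). The only small variation is in estimating the data term $\Ltwoinner[\Gamma\C]{\pospart{P^\param(v_h,\mu_h)}}{g\C-\param^{-1}\psi\C}$: you bound $\pospart{P^\param}$ directly via the Lipschitz contraction \cref{eq:tech2} and discrete norm equivalence, while the paper regroups it as $(\pospart{P^\param}+\mu_h)-\mu_h$ so that the first piece is absorbed by the $\param^{-1}\Ltwonorm[\Gamma\C]{\mu_h+\pospart{P^\param}}^2$ term already present in the coercivity bound; both give the required quadratic-minus-linear lower bound for fixed $h$.
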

\begin{proof}
To prove the existence of a solution, we show the continuity and the positivity of the
nonlinear operator $\form{A}+\form{B}\D+\form{B}\C$.
This allows us to apply Brouwer's fixed point theorem, see eg \cite[Chapter 2, Lemma 1.4]{Te77}. 

We define $\bop{F}:\productspace{V}_h\to\productspace{V}_h$, for $(v_h,\mu_h)\in\productspace{V}_h$, by
\begin{multline*}
\Ltwoinner{\bop{F}(v_h, \mu_h)}{(w_h,\eta_h)}
= (\form{A}+\form{B}\D+\form{B}\C)[(v_h,\mu_h),(w_h,\eta_h)]
\\- \form{L}\D (w_h,\eta_h)- \form{L}\C (w_h,\eta_h),
\end{multline*}
for all $(w_h,\eta_h)\in\productspace{V}_h$. We may write the non-linear system \cref{eq:aug_BEM} as
\begin{align}
\Ltwoinner{\bop{F}(v_h, \mu_h)}{(w_h,\eta_h)}&=0&&\forall(w_h,\eta_h)\in\productspace{V}_h.\label{signoproof:form}
\end{align}
For fixed $h$, by the equivalance of norms on discrete spaces, there exist $c_1,c_2>0$ such that for all $(v_h,\mu_h)\in\productspace{V}_h$,
\[
c_1 \Ltwonorm{(v_h,\mu_h)}
\leqslant
\Bnorm[\D]{(v_h,\mu_h)}
\leqslant
c_2 \Ltwonorm{(v_h,\mu_h)}.
\]

\noindent To show positivity, we let $(v_h,\mu_h)\in\productspace{V}_h$. Using \cref{corol:aug_pos}, we see that
\begin{multline*}
\Ltwoinner{\bop{F}(v_h,\mu_h)}{(v_h,\mu_h)}
    \geqslant \alpha \Bnorm[\D]{(v_h,\mu_h)}^2
    + \alpha\param^{-1}\Ltwonorm[\Gamma\C]{\mu_h + \pospart{P^\param(v_h,\mu_h)}}^2 \\
    + \Ltwoinner[\Gamma\C]{ \pospart{P^\param(v_h,\mu_h)}}{g\C - \param^{-1} \psi\C }
    - \form{L}\D (v_h,\mu_h) - \form{L}\C (v_h,\mu_h).
\end{multline*}
Using the Cauchy--Schwarz inequality and
an arithmetic-geometric inequality, we see that there exists $C_{g\C\psi\C}>0$ such that
\begin{align*}
&\Ltwoinner[\Gamma\C]{ \pospart{P^\param(v_h,\mu_h)}}{ g\C - \param^{-1} \psi\C }
    - \form{L}\D (v_h,\mu_h) - \form{L}\C (v_h,\mu_h)\\
&\hspace{20mm}=
\Ltwoinner[\Gamma\C]{ \pospart{P^\param(v_h,\mu_h)} + \mu_h}{g\C - \param^{-1} \psi\C }
    - \Ltwoinner[\Gamma\C]{ \mu_h}{g\C - \param^{-1} \psi\C}
\\&\hspace{28mm}
-\Ltwoinner[\Gamma\D]{ g\D}{ \betaparam\D v_h +\mu_h}
    - \Ltwoinner[\Gamma\C]{ \psi\C}{ v_h + \param^{-1}\mu_h}
   \\
&\hspace{20mm}\geqslant
    -C_{g\C\psi\C}^2 - \tfrac\alpha2 \left(\Bnorm[\D]{(v_h,\mu_h)}^2 + \param^{-1}
\Ltwonorm[\Gamma\C]{\mu_h + \pospart{P^\param(v_h,\mu_h)}}^2\right).
\end{align*}
Using norm equivalence, we obtain
\begin{align*}
&\Ltwoinner{\bop{F}(v_h,\mu_h)}{(v_h,\mu_h)}\\
    &\hspace{20mm}\geqslant\tfrac\alpha2\left(\Bnorm[\D]{(v_h,\mu_h)}^2 + \param^{-1}\Ltwonorm[\Gamma\C]{\mu_h + \pospart{P^\param(v_h,\mu_h)}}^2\right)-C_{g\C\psi\C}^2\\
    &\hspace{20mm}\geqslant C'\Ltwonorm{(v_h,\mu_h)}^2 - C_{g\C\psi\C}^2,
\end{align*}
for some $C'>0$. We conclude that for all $(v_h,\mu_h) \in\productspace{V}_h$ with
\[
\Ltwonorm{(v_h,\mu_h)}^2 > \frac{C_{g\C\psi\C}^2}{C'}+1,
\]
there holds $\Ltwoinner{\bop{F}(v_h,\mu_h)}{(v_h,\mu_h)}> 0$. 

To show continuity, let $(v^1_h,\mu^1_h),(v^2_h,\mu^2_h)\in\productspace{V}_h$.
We have for all $(w_h,\eta_h)\in\productspace{V}_h$,
\begin{align*}
\hspace{20mm}&\hspace{-20mm}
\Ltwoinner{\bop{F}(v^1_h,\mu^1_h)-\bop{F}(v^2_h,\mu^2_h)}{(w_h,\eta_h)}\\
&=
\Ltwoinner[\Gamma\C]{\pospart{P^\param(v^1_h,\mu^1_h)}-\pospart{P^\param(v^2_h,\mu^2_h)}}{w_h + \param^{-1} \eta_h}
\\&\hspace{10mm}
    +\tfrac12\Ltwoinner{\mu_h^1-\mu_h^2}{w_h+\param^{-1} \eta_h} 
    - \tfrac12\Ltwoinner[\Gamma\C]{v^1_h-v^2_h}{\mu^1_h-\mu^2_h}
\\&\hspace{10mm}
    + (\form{A}+\form{B}\D)[(v^1_h-v^2_h,\mu^1_h-\mu^2_h), (w_h,\eta_h)]\\
&\leqslant \left(\param\Ltwonorm[\Gamma\C]{v^1_h-v^2_h} + \Ltwonorm[\Gamma\C]{\mu^1_h-\mu^2_h}\right)
\left(\Ltwonorm[\Gamma\C]{w_h} + \param^{-1} \Ltwonorm[\Gamma\C]{\eta_h}\right),
\end{align*}
where we have used \cref{eq:tech2}. By norm equivalence, this means that
\[
\frac{\Ltwoinner{\bop{F}(v^1_h,\mu^1_h)-\bop{F}(v^2_h,\mu^2_h)}{(w_h,\eta_h)}}{\Ltwonorm{(w_h,\eta_h)}}\leqslant
C\Ltwonorm{(v^1_h-v^2_h,\mu^1_h-\mu^2_h)}
\]
showing that $\bop{F}$ is continuous.

It then follows by Brouwer's fixed point theorem \cite[Chapter 2, Lemma 1.4]{Te77}
that there exists a solution to \cref{signoproof:form} and hence also to \cref{eq:aug_BEM}.

Uniqueness is an immediate consequence of \cref{prop:monotone}. Assume that $(u_h^1,\lambda_h^1)$ and
$(u_h^2,\lambda_h^2)$ are solutions to \cref{eq:aug_BEM}. We immediately see that
\[
\alpha\left(\distance[\C]{(u_h^1,\lambda_h^1)}{(u_h^2,\lambda_h^2)}\right)^2 = 0,
\]
and we conclude that the solution is unique.
\end{proof}

We now proceed to prove the following best approximation result.
\begin{lemma}\label{lem:normlemma}
Let $(u,\lambda)\in \productspace{W}$ be the solution of \cref{eq:Laplace} and $(u_h,\lambda_h) \in \productspace{V}_h$ the
solution of \cref{eq:aug_BEM}. Then there holds
\[
\distance[\C]{(u,\lambda)}{(u_h,\lambda_h)} \leqslant C
  \inf_{(v_h,\mu_h)\in \productspace{V}_h} \starnorm{(u-v_h,\lambda - \mu_h)}.
\]
\end{lemma}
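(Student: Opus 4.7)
The plan is a Céa-type best-approximation argument adapted to the nonlinear monotone setting of \cref{eq:aug_BEM}. Throughout, write $(\Delta u, \Delta\lambda) := (u - u_h, \lambda - \lambda_h)$ and, for arbitrary $(v_h, \mu_h) \in \productspace{V}_h$, $(e_u, e_\lambda) := (u - v_h, \lambda - \mu_h)$. The strategy is to first use monotonicity to bound $\distance[\C]{(u,\lambda)}{(u_h,\lambda_h)}^2$ by a residual-type quantity tested against $(\Delta u, \Delta\lambda)$, then to use Galerkin orthogonality to swap the test function for the interpolation error $(e_u, e_\lambda)$, and finally to bound the result by $\distance[\C]{(u,\lambda)}{(u_h,\lambda_h)} \cdot \starnorm{(e_u,e_\lambda)}$.

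First, apply \cref{prop:monotone} with $(v,\mu)=(u,\lambda)$ and $(w,\eta)=(u_h,\lambda_h)$ to obtain
\begin{multline*}
\alpha (\distance[\C]{(u,\lambda)}{(u_h,\lambda_h)})^2 \leqslant (\form{A}+\form{B}\D)[(\Delta u, \Delta\lambda),(\Delta u, \Delta\lambda)]\\ + \form{B}\C[(u,\lambda),(\Delta u, \Delta\lambda)] - \form{B}\C[(u_h,\lambda_h),(\Delta u, \Delta\lambda)].
\end{multline*}
Since both $(u,\lambda)$ (by the consistency remark after \cref{eq:aug_BEM}) and $(u_h,\lambda_h)$ satisfy \cref{eq:aug_BEM} for every $(w_h,\eta_h)\in\productspace{V}_h$, subtraction (using the bilinearity of $\form{A}$ and $\form{B}\D$) yields the Galerkin-type orthogonality
\[
(\form{A}+\form{B}\D)[(\Delta u, \Delta\lambda),(w_h,\eta_h)] + \form{B}\C[(u,\lambda),(w_h,\eta_h)] - \form{B}\C[(u_h,\lambda_h),(w_h,\eta_h)] = 0.
\]
Choosing $(w_h,\eta_h)=(v_h-u_h,\mu_h-\lambda_h)\in\productspace{V}_h$ and noting $(\Delta u,\Delta\lambda)-(v_h-u_h,\mu_h-\lambda_h)=(e_u,e_\lambda)$, the right-hand side of the monotonicity bound becomes
\[
(\form{A}+\form{B}\D)[(\Delta u, \Delta\lambda),(e_u,e_\lambda)] + \form{B}\C[(u,\lambda),(e_u,e_\lambda)] - \form{B}\C[(u_h,\lambda_h),(e_u,e_\lambda)].
\]

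The bilinear part is handled by Property 2 of \cref{diri_coercive}, giving the bound $M\Bnorm[\D]{(\Delta u,\Delta\lambda)}\Bnorm[\D]{(e_u,e_\lambda)} \leqslant M\distance[\C]{(u,\lambda)}{(u_h,\lambda_h)}\starnorm{(e_u,e_\lambda)}$. For the nonlinear $\form{B}\C$ difference, expanding \cref{def:BC} and regrouping gives the identity
\begin{multline*}
\form{B}\C[(u,\lambda),(v,\mu)] - \form{B}\C[(u_h,\lambda_h),(v,\mu)] \\= \langle\Delta\lambda + \pospart{P^\param(u,\lambda)} - \pospart{P^\param(u_h,\lambda_h)}, v+\param^{-1}\mu\rangle_{\Gamma\C} - \tfrac12\langle\Delta\lambda,v\rangle_{\Gamma\C} - \tfrac12\langle\Delta u,\mu\rangle_{\Gamma\C}.
\end{multline*}
The first term is handled by Cauchy--Schwarz with weight $\param^{-1/2}$: the resulting factor $\|\param^{-1/2}(\Delta\lambda + \pospart{P^\param(u,\lambda)} - \pospart{P^\param(u_h,\lambda_h)})\|_{\Gamma\C}$ is exactly bounded by $\distance[\C]{(u,\lambda)}{(u_h,\lambda_h)}$, and is paired with $\|\param^{1/2}e_u\|_{\Gamma\C}+\|\param^{-1/2}e_\lambda\|_{\Gamma\C}\leqslant \starnorm{(e_u,e_\lambda)}$.

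The main obstacle are the cross-terms $-\tfrac12\langle\Delta\lambda,e_u\rangle_{\Gamma\C}-\tfrac12\langle\Delta u,e_\lambda\rangle_{\Gamma\C}$, whose weighted $L^2(\Gamma\C)$ factors on $\Delta u$ and $\Delta\lambda$ are not directly controlled by $\distance[\C]{(u,\lambda)}{(u_h,\lambda_h)}$. I would handle these by writing $\Delta\lambda = (\Delta\lambda + \pospart{P^\param(u,\lambda)} - \pospart{P^\param(u_h,\lambda_h)}) - (\pospart{P^\param(u,\lambda)} - \pospart{P^\param(u_h,\lambda_h)})$ and using \cref{eq:tech2} to bound the second piece by $|\param\Delta u-\Delta\lambda|$, then triangulating $\Delta u,\Delta\lambda$ through $(v_h,\mu_h)$ and absorbing the remaining discrete portions on $\Gamma\C$ via an inverse inequality on $\productspace{V}_h$. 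Once every resulting term is bounded by $\distance[\C]{(u,\lambda)}{(u_h,\lambda_h)}\cdot\starnorm{(e_u,e_\lambda)}$ (plus pure $\starnorm{(e_u,e_\lambda)}^2$ contributions), an arithmetic--geometric inequality absorbs the $\distance[\C]{(u,\lambda)}{(u_h,\lambda_h)}$ factor into the left-hand side of the monotonicity bound, and taking the infimum over $(v_h,\mu_h)\in\productspace{V}_h$ concludes the proof.
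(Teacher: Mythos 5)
You correctly set up the monotonicity-plus-Galerkin-orthogonality argument, you correctly bound the $(\form{A}+\form{B}\D)$ term via continuity (\cref{diri_coercive}), and you correctly handle the leading nonlinear term by weighted Cauchy--Schwarz with the $\param^{\pm1/2}$ factors. The gap is in your treatment of the cross-terms
$-\tfrac12\Ltwoinner[\Gamma\C]{u-u_h}{\lambda-\mu_h}-\tfrac12\Ltwoinner[\Gamma\C]{\lambda-\lambda_h}{u-v_h}$.
You frame these as requiring weighted $L^2(\Gamma\C)$ control of $u-u_h$ and $\lambda-\lambda_h$, which indeed $\distance[\C]{(u,\lambda)}{(u_h,\lambda_h)}$ does not provide, and you then propose a detour via the triangle inequality, \cref{eq:tech2}, and an inverse inequality on $\productspace{V}_h$. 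That detour is both unnecessary and unlikely to close: an inverse inequality introduces $h$-dependent constants that are not absorbed in the sketch, and would degrade the order of the best-approximation bound.

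The correct and much shorter route is to estimate the cross-terms by the $\Hspace{1/2}$--$\Hspace{-1/2}$ duality pairing rather than by weighted $L^2$ estimates. This gives
$\left|\Ltwoinner[\Gamma\C]{\lambda-\lambda_h}{u-v_h}\right|\lesssim\Hnorm{-1/2}{\lambda-\lambda_h}\,\Hnorm{1/2}{u-v_h}$
and
$\left|\Ltwoinner[\Gamma\C]{u-u_h}{\lambda-\mu_h}\right|\lesssim\Hnorm{1/2}{u-u_h}\,\Hnorm{-1/2}{\lambda-\mu_h}$,
and both right-hand sides are $\leqslant\Bnorm[\D]{(u-u_h,\lambda-\lambda_h)}\,\Bnorm[\D]{(u-v_h,\lambda-\mu_h)}\leqslant\distance[\C]{(u,\lambda)}{(u_h,\lambda_h)}\,\starnorm{(u-v_h,\lambda-\mu_h)}$, which is exactly what you need. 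Replacing your workaround by this duality step, collecting the three bounds, dividing through by $\distance[\C]{(u,\lambda)}{(u_h,\lambda_h)}$, and taking the infimum over $(v_h,\mu_h)\in\productspace{V}_h$ completes the proof and matches the paper's argument.
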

\begin{proof}
Using \cref{prop:monotone} and Galerkin orthogonality, we see that, 
for arbitrary $(v_h,\mu_h) \in \productspace{V}_h$,
\begin{align*}
\alpha\hspace{5mm}&\hspace{-5mm}\left(\distance[\C]{(u,\lambda)}{(u_h,\lambda_h)}\right)^2\\
&\leqslant (\form{A}+\form{B}\D)[(u-u_h,\lambda-\lambda_h),(u-u_h,\lambda-\lambda_h)] 
\\&\hspace{8mm}
+  \form{B}\C [(u,\lambda),(u-u_h,\lambda-\lambda_h)]
- \form{B}\C [(u_h,\lambda_h),(u-u_h,\lambda-\lambda_h)]\\
&= (\form{A}+\form{B}\D)[(u-u_h,\lambda-\lambda_h),(u-v_h,\lambda-\mu_h)]
\\&\hspace{8mm}
+\form{B}\C [(u,\lambda),(u-v_h,\lambda-\mu_h)]
-\form{B}\C [(u_h,\lambda_h),(u-v_h,\lambda-\mu_h)].
\end{align*}
Next, we use
\begin{multline*}
\form{B}\C [(u,\lambda),(u-v_h,\lambda-\mu_h)] - \form{B}\C [(u_h,\lambda_h),(u-v_h,\lambda-\mu_h)]\\
= \Ltwoinner[\Gamma\C]%
    {\lambda-\lambda_h +\pospart{P^\param(u,\lambda)}-\pospart{P^\param(u_h,\lambda_h)}}%
    {(u-v_h)+\param^{-1} (\lambda - \mu_h)}\\
-\tfrac12 \Ltwoinner[\Gamma\C]{u-u_h}{\lambda -\mu_h}
-\tfrac12 \Ltwoinner[\Gamma\C]{\lambda-\lambda_h}{u -v_h}
\end{multline*}
to show that
\begin{multline*}
(\form{A}+\form{B}\D)[(u-u_h,\lambda-\lambda_h),(u-u_h,\lambda-\lambda_h)] 
\\+\form{B}\C [(u,\lambda),(u-u_h,\lambda-\lambda_h)] - \form{B}\C [(u_h,\lambda_h),(u-u_h,\lambda-\lambda_h)] \\
=\underbrace{ (\form{A}+\form{B}\D)[(u-u_h,\lambda-\lambda_h),(u-v_h,\lambda - \mu_h)]}_\text{(I)}\\
\qquad\underbrace{\null-\tfrac12 \Ltwoinner[\Gamma\C]{ u-u_h}{ \lambda -\mu_h} 
- \tfrac12 \Ltwoinner[\Gamma\C]{ \lambda-\lambda_h}{ u -v_h}}_\text{(II)}\\
\quad+ \underbrace{\Ltwoinner[\Gamma\C]{ \lambda-\lambda_h +\pospart{P^\param(u,\lambda)} -\pospart{P^\param(u_h,\lambda_h)}}{(u-v_h)+\param^{-1} (\lambda - \mu_h)}}_\text{(III)}.
\end{multline*}

We estimate the three parts of the right-hand separately.
For the first term, we use the continuity of $\form{A}+\form{B}\D$ (\cref{diri_coercive}) to obtain
\begin{equation*}
\text{(I)}\leqslant M \Bnorm[\D]{(u-u_h,\lambda-\lambda_h)}  \Bnorm[\D]{(u-v_h,\lambda - \mu_h)}.
\end{equation*}
For the second line, we use $\Hspace{1/2}$--$\Hspace{-1/2}$ duality and the
  Cauchy--Schwarz inequality to obtain
\begin{equation*}
\text{(II)} \leqslant \Bnorm[\D]{(u-u_h,\lambda-\lambda_h)}  \Bnorm[\D]{(u-v_h,\lambda - \mu_h)}.
\end{equation*}
For the last term, we use the Cauchy--Schwarz inequality to get
\begin{multline*}
\text{(III)} \leqslant \Ltwonorm[\Gamma\C]{\param^{-1/2}\left(\lambda-\lambda_h +\pospart{P^\param(u,\lambda)}-
  \pospart{P^\param(u_h,\lambda_h)}\right)}\\
\cdot \left(\Ltwonorm[\Gamma\C]{\param^{1/2}(u-v_h)} + \Ltwonorm[\Gamma\C]{\param^{-1/2}(\lambda-\mu_h)}\right).
\end{multline*}
Collecting these bounds, we see that
\begin{equation*}
\distance[\C]{(u,\lambda)}{(u_h,\lambda_h)}^2 \lesssim
\distance[\C]{(u,\lambda)}{(u_h,\lambda_h)}\starnorm{(u-v_h,\lambda - \mu_h)}.
\end{equation*} Dividing through by $\distance[\C]{(u,\lambda)}{(u_h,\lambda_h)}$, and taking the infimum yields the desired result.
\end{proof}

We now prove the main result of this section, an \textit{a priori} bound on the error of the solution of \cref{eq:aug_BEM}.
\begin{theorem}\label{main_contact_result}
Let $(u,\lambda)\in \Hspace{s}\times\Hspace[\tilde\Gamma]{r}$ 
for some $s\geqslant1, r\geqslant0$ and $(u_h,\lambda_h)\in\Poly_h^k(\Gamma)\times\Lambda_h^l$ be the solutions of \cref{eq:Laplace}
and the discrete problem \cref{eq:aug_BEM}, respectively. If there is $\betamin>0$ such that $\betamin<\betaparam\D\lesssim h^{-1}$ and $\param \eqsim h^{-1}$, 
then
\begin{align*}
\Vnorm{(u-u_h,\lambda-\lambda_h)} &\leqslant
\distance[\C]{(u,\lambda)}{(u_h,\lambda_h)} \\
&\lesssim
h^{\zeta-1/2} \Hseminorm{\zeta}{u} + h^{\xi+1/2}
\Hseminorm[\tilde\Gamma]{\xi}{\lambda},
\end{align*}
where $\zeta = \min(k+1,s)$ and $\xi = \min(l+1,r)$ for $\Lambda_h^l\in \{\DPoly^l_h(\Gamma), \DjumpPoly^l_h(\Gamma)\}$
and $\zeta = \min(2,s)$ and $\xi = \min(\frac{1}{2},r)$ for $\Lambda_h^l=\DualPoly^0_h(\Gamma)$. Additionally,
\[
\Hnorm[\Omega]{1}{\tilde u-\tilde u_h} \lesssim
h^{\zeta-1/2} \Hseminorm{\zeta}{u} + h^{\xi+1/2}
\Hseminorm[\tilde\Gamma]{\xi}{\lambda},
\]
where $\tilde u$ and $\tilde u_h$ are the solutions in $\Omega$ defined by \cref{eq:represent}.
\end{theorem}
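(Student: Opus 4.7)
The plan is to build on Lemma~\ref{lem:normlemma}, which already reduces the error bound in the $\distance[\C]{\cdot}{\cdot}$ seminorm to a best approximation in the $\starnorm{\cdot}$-norm. Since $\distance[\C]{(u,\lambda)}{(0,0)} \geqslant \Vnorm{(u-u_h,\lambda-\lambda_h)}$, as noted right after the definition of $\distance[\C]{\cdot}{\cdot}$, the first inequality of the theorem is immediate, and everything reduces to bounding
\[
\inf_{(v_h,\mu_h)\in\productspace{V}_h}\starnorm{(u-v_h,\lambda-\mu_h)} = \inf_{(v_h,\mu_h)\in\productspace{V}_h}\!\left(\Bnorm[\D]{(u-v_h,\lambda-\mu_h)}+\Ltwonorm[\Gamma\C]{\param^{1/2}(u-v_h)}+\Ltwonorm[\Gamma\C]{\param^{-1/2}(\lambda-\mu_h)}\right).
\]
by an appropriate choice of $(v_h,\mu_h)$.

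First, I would take $v_h$ to be a quasi-interpolant in $\Poly_h^k(\Gamma)$ (e.g.\ the Scott--Zhang interpolant) and $\mu_h$ to be the $L^2$-projection of $\lambda$ onto $\Lambda_h^l$, which for the dual-mesh case $\DualPoly_h^0(\Gamma)$ satisfies the reduced-order estimate given in \cref{lemma:DUAL0app}. Standard approximation theory then gives $\Hnorm{1/2}{u-v_h}\lesssim h^{\zeta-1/2}\Hseminorm{\zeta}{u}$ and $\Hnorm{-1/2}{\lambda-\mu_h}\lesssim h^{\xi+1/2}\Hseminorm[\tilde\Gamma]{\xi}{\lambda}$ with the exponents in the theorem. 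For the weighted $L^2$-terms, I would combine $L^2$-approximation estimates $\Ltwonorm[\Gamma\C]{u-v_h}\lesssim h^{\zeta}\Hseminorm{\zeta}{u}$ and $\Ltwonorm[\Gamma\C]{\lambda-\mu_h}\lesssim h^{\xi}\Hseminorm[\tilde\Gamma]{\xi}{\lambda}$ with the weight scalings $\betaparam\D^{1/2}\lesssim h^{-1/2}$ and $\param^{\pm1/2}\eqsim h^{\mp1/2}$; this exactly absorbs one power of $h$ in the primal part and supplies the correct power $h^{\xi+1/2}$ in the flux part.

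Summing the resulting contributions, all three terms of $\starnorm{\cdot}$ are controlled by $h^{\zeta-1/2}\Hseminorm{\zeta}{u}+h^{\xi+1/2}\Hseminorm[\tilde\Gamma]{\xi}{\lambda}$, which yields the stated bound on $\distance[\C]{(u,\lambda)}{(u_h,\lambda_h)}$. For the second displayed inequality, I would use the representation formula \cref{eq:represent}, writing $\tilde u-\tilde u_h = -\popK(u-u_h)+\popV(\lambda-\lambda_h)$, and then invoke the stability bounds \cref{eq:sing_rec_stab} and \cref{eq:doub_rec_stab} to get
\[
\Hnorm[\Omega]{1}{\tilde u-\tilde u_h} \lesssim \Hnorm{1/2}{u-u_h} + \Hnorm{-1/2}{\lambda-\lambda_h} = \Vnorm{(u-u_h,\lambda-\lambda_h)},
\]
and then apply the first part of the theorem.

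The main obstacle is bookkeeping the approximation orders in the three different flux spaces, in particular handling $\DualPoly_h^0(\Gamma)$ where only the reduced exponent $\xi=\min(\tfrac12,r)$ is available via \cref{lemma:DUAL0app}, and simultaneously making sure that the loss of $h^{1/2}$ coming from the weight $\betaparam\D^{1/2}$ (and from $\param^{1/2}$ on $\Gamma\C$) is compensated by the gain of one power of $h$ when passing from the fractional norms to $\Ltwo$. The constraint $\betaparam\D\lesssim h^{-1}$ and $\param\eqsim h^{-1}$ is exactly what keeps these weighted terms of the same order as the $\Vnorm{\cdot}$-terms, so no additional logarithmic factor should appear.
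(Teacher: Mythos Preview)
Your proposal is correct and follows essentially the same route as the paper: invoke \cref{lem:normlemma}, estimate each term of $\starnorm{\cdot}$ via standard approximation results (and \cref{lemma:DUAL0app} for the dual space), balance the weighted $L^2$-terms using $\betaparam\D\lesssim h^{-1}$ and $\param\eqsim h^{-1}$, and obtain the $H^1(\Omega)$ bound from \cref{eq:sing_rec_stab,eq:doub_rec_stab}. One small slip: in your first paragraph you write $\distance[\C]{(u,\lambda)}{(0,0)} \geqslant \Vnorm{(u-u_h,\lambda-\lambda_h)}$, but the inequality you need (and that the paper uses) is $\distance[\C]{(u,\lambda)}{(u_h,\lambda_h)} \geqslant \Bnorm[\D]{(u-u_h,\lambda-\lambda_h)} \geqslant \Vnorm{(u-u_h,\lambda-\lambda_h)}$; the remark after the definition of $\distance[\C]{\cdot}{\cdot}$ is stated for a generic pair, not with second argument $(0,0)$.
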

\begin{proof}
First, we observe that for all $(v,\mu)$ and $(w,\eta)$ in $\productspace{W}$
\[
 \Vnorm{(v-w,\mu-\eta)}\leqslant\distance[\C]{(v,\mu)}{(w,\eta)}.
 \]
Using standard approximation results
for $\Lambda_h^l\in \{\DPoly^l_h(\Gamma), \DjumpPoly^l_h(\Gamma)\}$ (see eg \cite[chapter 10]{Stein07})
and \cref{lemma:DUAL0app} for $\Lambda_h^l=\DualPoly^0_h(\Gamma)$, we see that
\begin{align*}
\inf_{(v_h,\mu_h)\in\productspace{V}_h}\hspace{-2mm}\Vnorm{(u-v_h,\lambda-\mu_h)}
&=\inf_{v_h\in \Poly^k_h(\Gamma)}\hspace{-5mm}\Hnorm{1/2}{u-v_h}
+\inf_{\mu_h\in\Lambda^l_h(\Gamma)}\hspace{-5mm}\Hnorm{-1/2}{\lambda-\mu_h}\\
&\lesssim h^{\zeta-1/2}\Hseminorm{\zeta}{u}
         + h^{\xi+1/2}\Hseminorm[\tilde\Gamma]{\xi}{\lambda},\\
\inf_{v_h\in\Poly^k_h(\Gamma)}\Ltwonorm[\Gamma]{u-v_h}&\lesssim h^{\zeta}\Hseminorm{\zeta}{u},\quad
\inf_{\mu_h\in\Lambda^l_h}\Ltwonorm[\Gamma]{\lambda-\mu_h}\lesssim h^{\xi}\Hseminorm[\tilde\Gamma]{\xi}{\lambda}.
\end{align*}
Applying these to the definition of $\starnorm{\cdot}$ gives
\begin{multline*}
\inf_{(v_h,\mu_h)\in\productspace{V}_h}\starnorm{(u-v_h,\lambda-\mu_h)}
\lesssim h^{\zeta-1/2}\Hseminorm{\zeta}{u}
         + h^{\xi+1/2}\Hseminorm[\tilde\Gamma]{\xi}{\lambda}\\
         + \betaparam\D^{1/2}h^{\zeta}\Hseminorm{\zeta}{u}
         + \param^{1/2} h^{\zeta}\Hseminorm{\zeta}{u}
         +\param^{-1/2} h^{\xi}\Hseminorm[\tilde\Gamma]{\xi}{\lambda}.   
\end{multline*}
By means of \cref{lem:normlemma} and the given choice of the parameters $\param$ and $\betaparam\D$ this proves the first assertion.
The estimate in the domain $\Omega$ follows by using the relations
\cref{eq:sing_rec_stab,eq:doub_rec_stab}.
\end{proof}
If $\lambda$ is smooth enough and $k=l$, the bounds on $\param$ can be replaced with $h\lesssim\param\lesssim h^{-1}$ without reducing
the order of convergence.

\section{Numerical results}\label{sec:numerical}
We now demonstrate the theory with a series of numerical examples. In this section, we consider the following test problem.
Let $\Omega=[0,1]\times[0,1]\times[0,1]$ be the unit cube, $\Gamma\C:=\{(x,y,z)\in\Gamma:z=1\}$, and
$\Gamma\D:=\Gamma\setminus\Gamma\C$.
Let
\begin{subequations}\label{sig:testbc2}
\begin{align}
 g_D&=0,\\
 g\C&=
    \begin{cases}
        \sin(\pi x)\sin(\pi y)\sinh(\sqrt2\pi)&x\leqslant\frac12\\
        \sin(\pi y)\sinh(\sqrt2\pi)&x>\frac12
    \end{cases},\\
 \psi\C&=
    \begin{cases}
        \sqrt2\pi\sin(\pi x)\sin(\pi y)\cosh(\sqrt2\pi)&x\geqslant\frac12\\
        \sqrt2\pi\sin(\pi y)\cosh(\sqrt2\pi)&x<\frac12
    \end{cases}.
\end{align}
\end{subequations}
It can be shown that
\begin{equation*}
 u(x,y,z)=
    \sin(\pi x)\sin(\pi y)\sinh(\sqrt2\pi z)
\end{equation*}
is the solution to \cref{eq:Laplace} with these boundary conditions.

To solve the non-linear system \cref{nonlinearsystem}, we will treat the nonlinear term explicitly. Therefore, we
define
\begin{align}
\form{B}\C'[(u,\lambda),(v,\mu)]&:=\tfrac12 \Ltwoinner[\Gamma\C]{ \lambda}{v}
    + \Ltwoinner[\Gamma\C]{ \param^{-1}\lambda-\tfrac12 u}{\mu }
\end{align}
Note that $\form{B}'\C$ differs from $\form{B}\C$ only by the missing nonlinear term.

We pick initial guesses $(u_0,\lambda_0)\in\productspace{V}_h$ and define $(u_{n+1},\lambda_{n+1})\in\productspace{V}_h$,
for $n\in\mathbb{N}$, to be the solution of
\begin{multline}
(\form{A}+\form{B}\D+\form{B}\C')[(u_{n+1},\lambda_{n+1}),(v_h,\mu_h)] \\= \form{L}\C(v_h,\mu_h)-\Ltwoinner[\Gamma\C]{ \pospart{P^\param(u_n,\lambda_n)}}{v_h + \param^{-1}\mu_h}
\quad\forall(v_h,\mu_h)\in\productspace{V}_h.\label{eq:alg_form}
\end{multline}
This leads us to \cref{thealgorithm}, an iterative method for solving the contact problem.

In all the computations in this section, we preconditioned the GMRES solver using a 
mass matrix preconditioner applied blockwise from the left, as described in \cite{Betcke17}.

\begin{algorithm}
  \caption{Iterative algorithm for solving the contact problem}\label{thealgorithm}
\begin{algorithmic}
\STATE{Input $(u_0,\lambda_0)$, \textsc{tol}, \textsc{maxiter}}
\FOR{$n\gets0$ to \textsc{maxiter}}
\STATE{$(u_{n+1},\lambda_{n+1})\gets\text{solution of \cref{eq:alg_form}, calculated using GMRES}$}
\IF{$\Vnorm{(u_{n+1},\lambda_{n+1})-(u_{n},\lambda_{n})}< \textsc{tol}$}
\RETURN{$(u_{n+1},\lambda_{n+1})$}
\ENDIF
\ENDFOR
\end{algorithmic}
\end{algorithm}

\begin{figure}
\centering
\setlength\tabcolsep{0pt}
\begin{tabular}{rr}
\begin{tikzpicture}
\begin{axis}[small,axis on top,
             xlabel={$\param$},xmode=log,xmin=1e-2,xmax=1e2,
             ylabel near ticks,ylabel={Error in $\productspace{V}$ norm},ymode=log,ymax=1e2,
]
\DConeplot table {img/cube/old_dual_param_2_er.dat};
\DCtwoplot table {img/cube/old_dual_param_3.5_er.dat};
\DCthreeplot table {img/cube/old_dual_param_5_er.dat};
\end{axis}
\end{tikzpicture}&
\begin{tikzpicture}
\begin{axis}[small,axis on top,
             xlabel={$\param$},xmode=log,xmin=1e-2,xmax=1e2,
             ylabel near ticks,ylabel={Error in $\productspace{V}$ norm},ymode=log,ymax=1e2,
]
\Coneplot table {img/cube/old_P1DP0_param_2_er.dat};
\Ctwoplot table {img/cube/old_P1DP0_param_3.5_er.dat};
\Cthreeplot table {img/cube/old_P1DP0_param_5_er.dat};
\end{axis}
\end{tikzpicture}\\
\begin{tikzpicture}
\begin{axis}[small,axis on top,
             xlabel={$\param$},xmode=log,xmin=1e-2,xmax=1e2,
             ylabel near ticks,ylabel={{\textnumero} of outer iterations},ymin=0,ymax=50
]
\DConeplot table {img/cube/old_dual_param_2_it.dat};
\DCtwoplot table {img/cube/old_dual_param_3.5_it.dat};
\DCthreeplot table {img/cube/old_dual_param_5_it.dat};
\end{axis}
\end{tikzpicture}
&
\begin{tikzpicture}
\begin{axis}[small,axis on top,
             xlabel={$\param$},xmode=log,xmin=1e-2,xmax=1e2,
             ylabel near ticks,ylabel={{\textnumero} of outer iterations},ymin=0,ymax=50
]
\Coneplot table {img/cube/old_P1DP0_param_2_it.dat};
\Ctwoplot table {img/cube/old_P1DP0_param_3.5_it.dat};
\Cthreeplot table {img/cube/old_P1DP0_param_5_it.dat};
\end{axis}
\end{tikzpicture}\\
\begin{tikzpicture}
\begin{axis}[small,axis on top,
             xlabel={$\param$},xmode=log,xmin=1e-2,xmax=1e2,
             ylabel near ticks,ylabel={\tiny Average {\textnumero} of GMRES iterations},ylabel style={align=center},ymin=0,ymax=300
]
\DConeplot table {img/cube/old_dual_param_2_at.dat};
\DCtwoplot table {img/cube/old_dual_param_3.5_at.dat};
\DCthreeplot table {img/cube/old_dual_param_5_at.dat};
\end{axis}
\end{tikzpicture}&
\begin{tikzpicture}
\begin{axis}[small,axis on top,
             xlabel={$\param$},xmode=log,xmin=1e-2,xmax=1e2,
             ylabel near ticks,ylabel={\tiny Average {\textnumero} of GMRES iterations},
             ylabel style={align=center},ymin=0,ymax=2000
]
\Coneplot table {img/cube/old_P1DP0_param_2_at.dat};
\Ctwoplot table {img/cube/old_P1DP0_param_3.5_at.dat};
\Cthreeplot table {img/cube/old_P1DP0_param_5_at.dat};
\end{axis}
\end{tikzpicture}
\end{tabular}
\caption{The dependence of the error, number of outer iterations, and the average number of GMRES iterations on $\param$,
for the problem \cref{eq:Laplace} with boundary conditions \cref{sig:testbc2} on the unit cube with
$h=2^{-2}$ (\onedesc), $h=2^{-3.5}$ (\twodesc), and $h=2^{-5}$ (\threedesc). Here we take
$u_0=\lambda_0=0$, $\betaparam\D=0.01$, $\textsc{tol}=0.05$, and $\textsc{maxiter}=50$.
On the left (\DCcolordesc), we take $(u_n,\lambda_n),(v_h,\mu_h)\in\Poly_h^1(\Gamma)\times\DualPoly_h^0(\Gamma)$;
on the right (\Ccolordesc), we take $(u_n,\lambda_n),(v_h,\mu_h)\in\Poly_h^1(\Gamma)\times\DPoly_h^0(\Gamma)$.
}
\label{figsig:cube_dual_param}
\end{figure}

Inspired by the parameter choices in \cite{BeBuScro18}, we fix $\betaparam\D=0.01$ and look for suitable values of the parameter $\param$.
\cref{figsig:cube_dual_param} shows how the error, number of outer iterations, and the average number of GMRES
iterations inside each outer iteration change as the parameter $\param$ is varied,
for both $\productspace{V}_h=\Poly_h^1(\Gamma)\times\DualPoly_h^0(\Gamma)$ (left, \DCcolordesc) and $\productspace{V}_h=\Poly_h^1(\Gamma)\times\DPoly_h^0(\Gamma)$ (right, \Ccolordesc).
Here, we see that the error and number of outer iterations are lowest when $\param$ is between around 1 and 10.

\begin{figure}
\centering
\centering
\setlength\tabcolsep{0pt}
\begin{tabular}{rr}
\begin{tikzpicture}
\begin{axis}[small,axis on top,axis equal,
             xmax=1,xmin=0.01,
             xlabel={$h$},xmode=log,x dir=reverse,
             ylabel near ticks,ylabel={Error in $\productspace{V}$ norm},ymode=log,
]
\referenceplot table {
0.4 10
0.03 0.75
};
\DCplot table {img/cube/old_dual_conv_er.dat};
\end{axis}
\end{tikzpicture}
&
\begin{tikzpicture}
\begin{axis}[small,axis on top,axis equal,
             xmax=1,xmin=0.01,
             xlabel={$h$},xmode=log,x dir=reverse,
             ylabel near ticks,ylabel={Error in $\productspace{V}$ norm},ymode=log,
]
\referenceplot table {
0.2 20
0.04 1.78885438
};
\Cplot table {img/cube/old_P1DP0_conv_er.dat};
\end{axis}
\end{tikzpicture}
\\
\begin{tikzpicture}
\begin{axis}[small,axis on top,
             xmax=1,xmin=0.01,
             xlabel={$h$},xmode=log,x dir=reverse,
             ylabel near ticks,ylabel={{\textnumero} of outer iterations},ymin=0,ymax=200
]
\DCplot table {img/cube/old_dual_conv_it.dat};
\end{axis}
\end{tikzpicture}
&
\begin{tikzpicture}
\begin{axis}[small,axis on top,
             xmax=1,xmin=0.01,
             xlabel={$h$},xmode=log,x dir=reverse,
             ylabel near ticks,ylabel={{\textnumero} of outer iterations},ymin=0,ymax=200
]
\Cplot table {img/cube/old_P1DP0_conv_it.dat};
\end{axis}
\end{tikzpicture}
\\
\begin{tikzpicture}
\begin{axis}[small,axis on top,
             xmax=1,xmin=0.01,
             xlabel={$h$},xmode=log,x dir=reverse,
             ylabel near ticks,ylabel={\tiny Average {\textnumero} of GMRES iterations},ylabel style={align=center},ymin=0,ymax=300
]
\DCplot table {img/cube/old_dual_conv_at.dat};
\end{axis}
\end{tikzpicture}
&
\begin{tikzpicture}
\begin{axis}[small,axis on top,
             xmax=1,xmin=0.01,
             xlabel={$h$},xmode=log,x dir=reverse,
             ylabel near ticks,ylabel={\tiny Average {\textnumero} of GMRES iterations},ylabel style={align=center},ymin=0,ymax=300
]
\Cplot table {img/cube/old_P1DP0_conv_at.dat};
\end{axis}
\end{tikzpicture}
\end{tabular}
\caption{The error, number of outer iterations and averge number of inner GMRES iteration
for the problem \cref{eq:Laplace} with boundary conditions \cref{sig:testbc2} on the unit cube as $h$ is reduced.
Here we take
$u_0=\lambda_0=0$, $\betaparam\D=0.01$, $\textsc{tol}=0.05$, $\textsc{maxiter}=200$,
and $\param=0.5/h$.
On the left (\DCdesc), we take $(u_n,\lambda_n),(v_h,\mu_h)\in\Poly_h^1(\Gamma)\times\DualPoly_h^0(\Gamma)$;
on the right (\Cdesc), we take $(u_n,\lambda_n),(v_h,\mu_h)\in\Poly_h^1(\Gamma)\times\DPoly_h^0(\Gamma)$.
The dashed lines show order 1 convergence (left) and order 1.5 convergence (right).
}
\label{figsig:cube_dual_conv}
\end{figure}

Motivated by \cref{figsig:cube_dual_param} and the bounds in
\cref{main_contact_result},
we take $\param=0.5/h$, and look at the convergence as $h$ is decreased.
\cref{figsig:cube_dual_conv} shows how the error and iteration counts vary as $h$ is decreased when
$\productspace{V}_h=\Poly_h^1(\Gamma)\times\DualPoly_h^0(\Gamma)$ (left, \DCdesc)
and
$\productspace{V}_h=\Poly_h^1(\Gamma)\times\DPoly_h^0(\Gamma)$ (right, \Cdesc).

For $\productspace{V}_h=\Poly_h^1(\Gamma)\times\DualPoly_h^0(\Gamma)$,
we observe slightly higher than the order 1 convergence predicted by \cref{main_contact_result}.
In this case, the mass matrix preconditioner is effective, as the number of GMRES iterations required inside each outer iteration is
reasonably low, and only grows slowly as $h$ is decreased. We believe that the effectiveness of the preconditioner for this choice of
spaces is due to the spaces $\Poly_h^1(\Gamma)$ and $\DualPoly_h^0(\Gamma)$ forming an inf-sup stable pair \cite[Lemma 3.1]{SteinbachDuals}.

When $\productspace{V}_h=\Poly_h^1(\Gamma)\times\DPoly_h^0(\Gamma)$,
\cref{main_contact_result} tells us to expect order 1.5 convergence.
However, we observe a slightly lower order.
This appears to be due to the ill-conditioning of this system, and the mass matrix preconditioner being ineffective,
leading to an inaccurate solution when using GMRES.
In this case, the spaces $\Poly_h^1(\Gamma)$ and $\DPoly_h^0(\Gamma)$ do not form an inf-sup stable pair, and so the mass-matrix between them is not guaranteed to be invertible
leading to a less effective preconditioner.


In order to obtain order 1.5 convergence with a well-conditioned system, 
we could look for $(u_h, \lambda_h)\in\Poly_h^1(\Gamma)\times\DPoly_h^0(\Gamma)$ and test
with $(v_h,\mu_h)\in\DualPoly_h^1(\Gamma)\times\DualPoly_h^0(\Gamma)$, where $\DualPoly_h^1(\Gamma)$ is the space of piecewise linear
functions on the dual grid that forms an inf-sup stable pair with the space $\DPoly_h^0(\Gamma)$, as defined in \cite{BC}.
With this choice of spaces, we obtain the higher order convergence as in \cref{main_contact_result},
while having stable dual pairings and hence more effective mass matrix preconditioning.

For the problems discussed in \cite{BeBuScro18}, we have run numerical experiments using this space pairing and observe the full order $\tfrac32$ convergence in a low number of iterations.
A deeper investigation of this method using these dual spaces, and the adaption of the theory to this case, warrants
future work.

\section{Conclusions}
Based on our work in \cite{BeBuScro18}, we have analysed and demonstrated the effectiveness of Nitsche type coupling methods for
boundary element formulations of contact problems.

An open problem is preconditioning. While the iteration counts in the presented examples were already 
practically useful, for large and complex structures preconditioning is still essential. The hope is to use
the properties of the Calder\'on projector to build effective operator preconditioning techniques for the
presented Nitsche type frameworks.

Avenues of future research include looking at how this approach could be applied to problems 
in linear elasticity, and an extension of this method to problems involving friction.

\bibliographystyle{siamplain}
\bibliography{contact}
\end{document}